\let\bbordermatrix\bordermatrix
\patchcmd{\bbordermatrix}{8.75}{4.75}{}{}
\patchcmd{\bbordermatrix}{\left(}{\left[}{}{}
\patchcmd{\bbordermatrix}{\right)}{\right]}{}{}
\newtcbox{\mymath}[1][]{%
	nobeforeafter, math upper, tcbox raise base,
	enhanced, colframe=blue!30!black,
	colback=blue!30, boxrule=1pt,
	#1}
\newcommand{\vv}{V}
\newcommand{\ee}{E}
\newcommand{\yy}{\mathbf{y}}
\newcommand{\xx}{\mathbf{x}}
\newcommand{\QQ}{Q}
\newcommand{\CC}{\mathbb{C}}
\newcommand{\RR}{\mathbb{R}}
\newcommand{\PP}{P}
\newcommand{\NN}{N}
\def \HB {B}
\def \KK {K}
\def \SS {S}
\def \BB {{\mathbf{B}}}
\def \CC {{\mathbf{C}}}
\def \DD {{\mathbf{D}}}
\def \HH {{\mathbf{H}}}
\def \FF {{\mathbf{F}}}
\def \LL {l}
\def \BS {{\mathbf{S}}}
\def \KLU {{\tt KLU}}
\def \LAP {{\tt LAPACK}}
\def \incCG {{\tt incCG}}
\def \lapCG {{\tt lapackCG}}
\def \kluCG {{\tt kluCG}}
\def \II {{\mathbf{I}}}
\def \zero  {{\mathbf{0}}}
\def \XI {I}
\def \XJ {J}
\def \rhs {{\mathbf{b}}}
\def \aa {{\mathbf{\alpha}}}
\def \bb {{\mathbf{\beta}}}
\newcommand{\cc}{\mathbf{c}}
\def \uu {{\mathbf{\mu}}}
\def \ll {{\mathbf{\lambda}}}
\def \ww {w}
\def \PW {\omega^{\ww}}
\def \PPW {\omega^{\ww'}}
\def \dd {q} 
\def \ksi {{\mathbf{\xi}}}
\def \proj {\delta}
\def \UU {{\mathbf{M}}}
\def  \MCF {{{\bf MCF}}}
\def \MMCF  {{{\bf MMCF}}}
\def \MIN {{{\bf \min}}}
\def  \AA {{\mathbf{A}}}
\def \AIJ {{\AA_{\XI,\XJ}}}
\def \HC {C}
\def \SCG {{\tt SCG}}
\def \SMCG {{\tt SMCG}}
\def \locS {{\tt locSolver}}
\def \incS {{\tt incSolver}}
\def \reach {{\tt reach}}
\def \lift {{\tt lift}}
\newtheorem{mydef}{Definition}
\newtheorem{model}{Model}
\DeclareMathOperator*{\argmin}{arg\,min}
\title{Solving Splitted Multi-Commodity Flow Problem by Efficient　 Linear Programming Algorithm }
\author{Liyun Dai \and  Hengjun Zhao \and Zhiming Liu }
\institute{Liyun Dai(\Letter)
	\at  RISE, Southwest University, Chongqing, China
	\\\email{dailiyun@swu.edu.cn}
	\and
	Hengjun Zhao
	\at   RISE, Southwest University, Chongqing, China
	\\\email{zhaohj2016@swu.edu.cn}
	\and
	Zhiming Liu
	\at   RISE, Southwest University, Chongqing, China
	\\\email{zhimingliu88@swu.edu.cn}
}
\begin{document} 
\label{firstpage}
\date{}
\maketitle

\begin{abstract}
Column generation is often used to solve  {\em multi-commodity flow problems}.  A program for  column generation  always includes a module that solves a linear equation.
In this paper, we address  three major issues in solving linear problem during column generation procedure which are  
  (1) how to employ the sparse property of the coefficient matrix;  (2) how to reduce the size of the  coefficient matrix;  and (3) how to reuse the solution to a similar equation.  To this end, we first analyze the sparse property 
   of  coefficient matrix of linear equations and find that the matrices occurring in iteration are very sparse.  Then, we present an algorithm \locS\  (for \emph{localized system solver})  for linear equations with sparse coefficient matrices and right-hand-sides. This algorithm can  reduce the number of variables.   After that, we present  the algorithm \incS\  (for \emph{incremental system solver})  which utilizes similarity  in the iterations of the program for a linear equation system. All three techniques can be used in  column generation
  of multi-commodity problems. Preliminary numerical experiments show that the \incS\   is significantly  faster than the existing algorithms.  For example, random test cases show that  \incS\  is at least 37 times and  up to   341 times faster than popular solver \LAP. 

  \keywords{Multi-commodity flow problem, column generation, software
  	defined network, vehicle routing problem}
\end{abstract}

\section{Introduction}
\label{sec:intro}
The  multi-commodity flow problem (\MCF) is a network flow problem with multiple commodities (flow demands) between different source and target nodes. Solving this problem is to  find an assignment to all the flow variables such that  certain given constraints are satisfied~\cite{ford2004a}.  Many application problems can be reduced to \MCF. 
 Examples of these applications include the vehicle routing problem 　({\bf VRP}) \cite{letchford2015stronger,cattaruzza2014an,moshrefjavadi2016the}, 
the traveling salesman problem ({\bf TSP}) \cite{hernandezperez2009the}, and 
problems of routing and wavelength assignment ({\bf RWA}) \cite{leesutthipornchai2010solving,patel2012routing}.
While it is well known that   offline network resource optimization and planing  in  traditional network is  a  typical \MCF\ \cite{Ahuja:1993,ford2004a,Jajszczyk2005},
online network resource optimization and planing, which are now widely regarded  as software
	defined network  ({\bf SDN}), are also treated as \MCF\ \cite{hong2013achieving,guo2014traffic,kandula2015calendaring}.

Because of its importance, there have been a sizable body of work on  \MCF, e.g. \cite{mahey2001capacity,DBLP:GargK07,holmberg2003a,Huisman2005,ZHU2012164,barnhart1998branch-and-price:,degraeve2007a,holmberg2003a,huisman2007a,salmasi2010total,Briant2008}, in which  {\em colummn generation} is widely used. A survey on column generation is given in \cite{lubbecke2005selected}. There, the algorithms  are  divided in two classes, which are  called {\em exact algorithms}              \cite{dantzig1967generalized,mccallum1977a,barnhart1994a,mamer2000a,holmberg2003a,dinh2013combining} and  {\em approximation algorithms} \cite{goldberg1992a,grigoriadis1994fast,awerbuch1994improved,Grigoriadis96,even1999fast,fleischer1999approximating,fleischer2002fast,bienstock2006approximating,karakostas2008faster}, respectively. In this paper, we will focus on the　 exact algorithm for the splitted multi-commodity flow problem in which  the flow demands  can be splitted among multiple paths for one commodity.

{\bf Organization:} After this introduction, we introduce \MCF\ and three different models for it in Section \ref{sec:pro}. In Section \ref{sec:review}, we give a summary on column generation for  \MCF. We show in Section \ref{sec:struct} how we  apply the result in \cite{dantzig1967generalized} to \MCF, and present a concrete block structure of the  basic matrix of column generation. In Section \ref{section:sparse}, we present the properties of the  coefficient matrix. The test  results show that  the number of nonzero elements in each row of the coefficient matrix is less than 5 even when the  length of the row is greater than $1000$. Thus, the matrix is very sparse.  We devote Section \ref{sec:advance} to present  the two algorithms that are our main contribution in this paper. The first algorithm, called \locS,  is a localized system solver. This algorithm can reduce the number of variables in solving a linear equation when both its coefficient matrix and right-hand-sides are sparse. The second algorithm, called \incS, is an incremental system solver which utilizes similarity during the iterations in solving linear equations. We present our experiment test results in Section \ref{sec:exp}, and conclusions in Section~\ref{section:conc}.
%
%
%


\section{Model for Multi-Commodity Flow Problem}
\label{sec:pro}

In this section, we define the basic formulation of multi-commodity flow  problem ({\bf Model 1} below). We then present two more models,  which are called  \emph{Node-Link Formulation} and \emph{Link-Path Formulation} of \MCF\, respectively.  Both are  
linear programming models  with a large numbers of  variables and   constraints.

\subsection{The Basic Model of  {\bf MCF}}
\label{sec:prob}

Graphs are the most fundamental mathematical models for networks, and their edges and/or nodes are associated with numerical functions for quantity based network control and management. The basic graph model used to represent a {\bf MCF}  is a direct graph with {\em capacities} and {\em weights} assigned to its edges, which are used to represent factors and elements of ``effectiveness" and   ``cost elements" of network resources, respectively. 

A \emph{capacitated and weighted network}  is a triple $\mathcal{N}=\left(G (\vv,\ee), \dd, \ww\right)$, where
\begin{itemize}
	\item $G(\vv, \ee)$ is a directed graph with the  set $\vv$  of nodes (or vertices) and the  set $\ee$ of links (or edges). A link $e\in \ee$  from node $u$ to node $v$ is denoted  by $(u,v)$, where $u,v\in \vv$.
	\item $\dd$  and $\ww$ are mappings from $\ee$ to non-negative real numbers. For  each edge $e\in \ee$, function $q$ assigns $e$ with a   {\em capacity} $q(e)$,  and function  $\ww$ assigns $e$ with a weight $\ww(e)$, respectively.
\end{itemize}  

A commodity is a measure of the demand in a network. Formally, for  capacitated and weighted network $\mathcal{N}$, a \emph{commodity} (or demand) is a triple  $D = (s,t, d)$, where  $s$ and $t$ are nodes  of $\mathcal{N}$, and $d$ the bandwidth of  non-negative value. The nodes $s$ and $t$  are called \emph{source}  and \emph{ target} of commodity $D$, respectively. We are now ready to formulate the basic model of {\bf MCF} below.

\begin{model}[\MCF \label{model:base}]
Given a capacitated and weighed network $\mathcal {N}$, let $\KK=\{D_1,D_2,\cdots, D_{\LL}\}$ be a set of $\LL$ commodities, where $D_i=(s_i,t_i,d_i)$ on $\mathcal{N}$, and $f_{i}(u,v)$  be a variable for each link $(u,v)$ of  $\mathcal{N}$ that takes values in the interval $[0,d_i]$, for $i=1,\cdots, \LL$.  The {\em basic multi-commodity flow problem} is to solve the following linear equation for the flow variables $f_i(u,v)$  with   four constraints:

\begin{align}
	\min\quad& \sum_{(u,v)\in \ee} \left( \ww(u,v) \sum_{i=1}^{\LL} f_i(u,v) \right) \label{opt:cost}\\
	\mbox{s.\ t.}\quad &\sum_{i=1}^{\LL}  f_i(u,v) \le \dd(u,v) \quad   (u,v)\in \ee,    \quad \quad \quad\   \quad \quad \quad \quad \quad \quad (\mbox{Capacity constaints}) \label{cons:cap}\\
	&\sum_{v\in \vv }  f_i(u,v)= \sum_{v\in \vv }  f_i(v,u),\quad   u\in \vv\setminus \{  s_i, t_i\},\ i=1,\cdots,\LL\quad (\mbox{Flow conservation})  \label{cons:cons} \\
	&\sum_{v \in \vv} f_i(s_i, v) =\sum_{v\in \vv} f_i(v,t_i) = d_i,\quad    i=1,\cdots, \LL \qquad  \qquad  (\mbox{Demand satisfication})  \label{cons:dem}\\
	&f_i(u,v)\ge 0,  \quad  i=1,\cdots,\LL \mbox{ and } (u,v)\in \ee  \nonumber 
	\end{align}
\end{model}
Notice that constraint (\ref{opt:cost}) is an objective function.   The basic \MCF\ formulation, the flow variables $f_i(u,v)$ of the commodities of $\KK$ represents the  fraction of flow for commodity  $D_i$ along edge $(u,v)$. Thus, $f_i(u,v)\in [0,d_i]$ in the general case when the commodity $d_i$ can be split among the flows of multiple paths, and $f_i(u,v)$ can only take one of the two possible valued  $\{0,d_i\}$ otherwise (i.e. ``single path routing"). In this paper, we focus   on $f_i(u,v)\in[0,d_i]$. Taking the capacities and weights $\dd(u,v)$ and $\ww(u,v)$  of the edges  $(u,v)\in \ee$ as the cost element,  finding an  assignment  $f=(f_1, \cdots, f_\LL)$ in the above linear equation problem is called the  {\em minimum cost multi-commodity flow problem} ({\bf min-MCF}), indicated by constraint (\ref{opt:cost}).

\subsection{Node-Link Formulation}
\label{sec:nodelink}
In {\bf Model 1}, constraint~(\ref{cons:dem}) requires that the  demand $d_i$ of each  commodity is fully  delivered through the flows along the paths from the source to the target. However, in general, only a part of the demand of a commodity can be ``successfully''  delivered, which means that constraints~(\ref{cons:dem}) become 

\[\sum_{v \in \vv} f_i(s_i, v) =\sum_{v\in \vv} f_i(v,t_i) \le d_i\]
where $i=1,\cdots, \LL$.

\begin{figure}[h!]
	\centering{
		\includegraphics[width=0.7\textwidth]{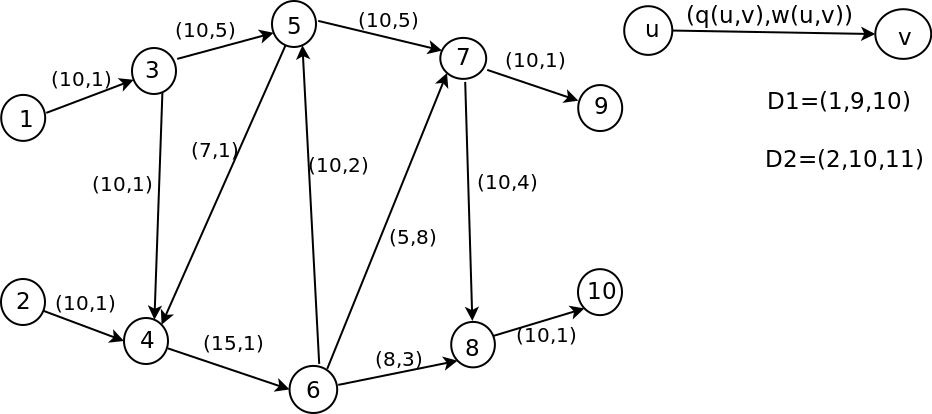}
		\caption{A \MCF\ example.\label{example:1}}
	}
	\begin{flushleft}
		{The network is given as above  figure. There are $10$ nodes, $13$ edges:
			\[\ee=[(1,3),(3,5),(5,7),(7,9),(2,4),(4,6),(6,8),(8,10),(3,4),(5,4),(6,5),(6,7),(7,9)]\]  and two commodities $D_1=(1,9,10),D_2=(2,10,11)$. The values of the pair of numbers on an edge from $u$ to  $v$ are the capacity  $\dd(u,v)$ and weight $\ww(u,v)$ of the edge, respectively. }
	\end{flushleft}
\end{figure}

Then it is desirable to seek the maximum portion of the command of each commodity to be successfully delivered with  minimum cost.  This  case of {\bf MCF} is  called the  \emph{maximal  multi-commodity flow problem} (\MMCF).  The primary requirement of  {\bf MMC} is  to try to deliver  all the demand, and the secondary requirement is to  minimize the total cost. 

 We use $|S|$ to denote the cardinality of  set $S$, and  $|\AA|$ to denote the dimension of a  square matrix $\AA$.

\begin{model}[Node-Link Formulation \cite{Jajszczyk2005}]
	\label{model:node}
The formal description of \MMCF\  is defined  as follows:
\begin{align*}
  \MIN\quad &   \sum_{(u,v)\in \ee} \left( \ww(u,v) \sum_{i=1}^{\LL} f_i(u,v)\right) +&W \sum_{i=1}^{\LL}\left(d_i-
  \sum_{v \in \vv} f_i(s_i, v)  \right)   \\
  \mbox{s.\ t.}\quad &\sum_{i=1}^{\LL}  f_i(u,v) \le \dd(u,v),     & (u,v)\in \ee  \\
  &\sum_{v\in \vv }  f_i(u,v)=\sum_{v\in \vv }  f_i(v,u),        &  u \in \vv \setminus \{  s_i, t_i\} \mbox{ for } i=1,\cdots, \LL 
  \\
  &\sum_{v \in \vv} f_i(s_i, v) =\sum_{v\in \vv} f_i(v,t_i) \le d_i, &  i=1,\cdots, \LL \\ 
    &f_i(u,v)\ge 0,                                                  &  i=1,\cdots,\LL\mbox{ and }  (u,v)\in \ee 
\end{align*} 
where  $W$ is a nonnegative real number that satisfies $W> \max\{\PW_{p}\mid  p\in \PP_i, \mbox{ for } i=1\cdots, \LL \} $ and  $\PW_{p}=\sum_{(u,v) \in p} {\ww(u,v)}$. 

\end{model}

$W \sum_{i=1}^{\LL}\left(d_i-
\sum_{v \in \vv} f_i(s_i, v)  \right)$is the \emph{penalty term} in the objective function.  \emph{Node-Link Formulation} is   a 
linear programming model with $\LL|\ee|$ variables and
$|\ee|+\LL(|\vv|-1)$  constraints.  It is easy to see that  \MCF\ is a special case of \MMCF\  when $\sum_{i=1}^{\LL}\left(d_i-
\sum_{v \in \vv} f_i(s_i, v)  \right)=0$, which means that  all commodities are successfully delivered.

\begin{example}
	In Fig. \ref{example:1}, we can choose $W$ as sum of all links' weight, which is $34$.
\end{example}

\subsection{Link-Path Formulation}
\label{sec:linkpath}

In the previous two models of linear equations, the variables are the accounts of flows of links. We now present a formulation based on the accounts of flows of paths. For a path $p$, we denote the account of flow along  path $p$  as  a  variable $\xx_{p}$.  For an arbitrary path $p$ and an edge $e$, we define the following (characteristic) function

\[ \proj_{p,e} =
\begin{cases}
1       & \quad   \text{ if link } e \text{ belongs to path } p\\
0  & \quad \text{ otherwise.}
\end{cases}
\]
For a precise formulation of \MMCF,  we introduce the following notations below for a given set  $\KK$ of commodity.
\begin{itemize}
	\item Let $\PP_i$ denote an enumeration of the set of paths from $s_i$ to $t_i$ without loops (called {\em simple paths}), for $D_i=(s_i,t_i,d_i)$ and $i=1,\cdots, \LL$.
	\item Given a path $p$, let $(u,v)\in p$ denote that edge $(u,v)$  is in path $p$ and  path is along edge $(u,v)$.  
\end{itemize}

\begin{model}[Link-Path Formulation \cite{Jajszczyk2005}]\label{model:link}
 \MMCF\ can be described as a problem of finding an assignment to the variables $\xx_{p}$ for $p \in \PP_i,\  i=1\cdots, \LL$,  satisfying the following constraints.
	\begin{align}
	\MIN\quad &   \sum_{i=1}^{\LL} \sum_{p\in \PP_i} \PW_{p}\xx_{p}+ W  \sum_{i=1}^{\LL} y_i & \label{model:final}	 \\
	\mbox{s.\ t.}\quad & \sum_{p\in \PP_i} \xx_{p}+y_i = d_i,& i=1,\cdots,\LL & \label{model:commodity}\\
	&\sum_{i=1}^{\LL}\sum_{p\in \PP_i} \xx_{p} \proj_{p,e} \le \dd(e), &  e\in \ee  & \label{model:edge} \\
	&\xx_{p} \ge 0,y_i\ge 0,&   p\in \PP_i,\ i=1,\cdots, \LL& \nonumber 
	\end{align} 
\end{model}
In this model, (\ref{model:final}) is the objective function, $y_i$ are \emph{slack variables} which represent the  portion of  demand for commodity $D_i$ that fails to be delivered, and  $ W  \sum_{i=1}^{\LL} y_i $ is the \emph{penalty term} to  objective function.
	  \emph{Link-Path Formulation} is a linear programming model  with  $\LL+|\ee|$ constraints and $\sum_{i=1}^{\LL} |\PP_i| +\LL$ variables. 
	It is easy to see that $\sum_{i=1}^{\LL} |\PP_i| +\LL$  might become very large even for a small network.
	
\begin{example}
	Fig. \ref{fig:ans} is the topology (a bidirectional graph) of one backbone network of USA. This topology has 18 nodes and  52 
	links. Even in this small topology there are 97 different simple paths that connect {\bf Hawaii} and {\bf Hartford}.
	\begin{figure}[ht]
		\centering
		\includegraphics[width=0.8\textwidth]{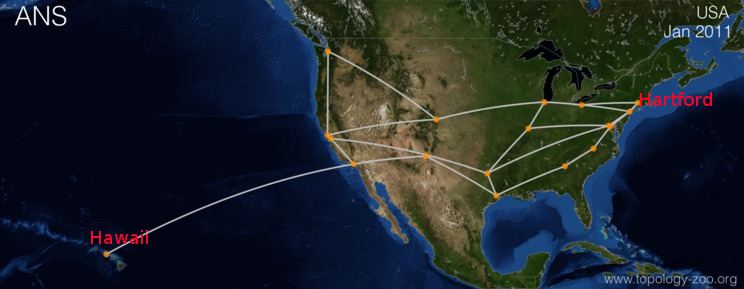}
		\caption[Caption without FN]{Network topology of USA.\label{fig:ans}}
		The original picture can been found in \url{http://www.topology-zoo.org/maps/Ans.jpg}
	\end{figure}	
\end{example}
In summary, we can see 
\begin{enumerate}
 \item both  Node-Link Formulation and Link-Path Formulation are linear programming model.
 \item Node-Link Formulation has fewer variables  than Link-Path Formulation, while Link-Path Formulation has fewer
constraints than Node-Link. 
 \item In general, both models either  too  many variables or too many constraints in practice.
\end{enumerate}


\section{The Column Generation Algorithm for Multi-Commodity Flow Problem}
\label{sec:review}
In this section, we first review the  classical  column generation. We then introduce a transition system model for understanding and analysis of this algorithm and  the improved algorithm that we propose later.  Finally in this section, we present the  matrix formulation of classical  column generation. 
\subsection{ The Algorithm of Column Generation}
\label{sec:class}

The variables in  {\bf Model \ref{model:link}} are often too many  to be dealt with explicitly. Luckily, column generation~\cite{ford2004a}
treats non-basic variables implicitly. It replaces the traditional method for determining a vector to entering
 basic by finding a shortest path which connects commodity source and target.
It has   better performance  than the simplex method for Link-Path Formulation   because both the number of variables and constraints  are  reduced to
$ |\ee| +\LL$ during every iteration. The basic idea of the algorithm is as follows.

In order to design an algorithm for  full deliver of  each demand $d_i$, we introduce a dummy path for  for each commodity $D_i$, denoted by $dummy_i$. Let the  capacity  of $dummy_i$ be $d_i$ and  $\PW_{dummy_i}=W$,  where $W$ is value defined in {\bf Model \ref{model:node}}. We call the original network extended with the dummy paths  $dummy_i, i=1,\cdots,\LL,$,  the  {\em augmenting network}, and define $\mathcal{P}= \bigcup_{i=1}^{\LL}\{P_i \cup \{dummy_i\}\}$ to denote the set of all paths of the commodities of the augmenting network.

 The algorithm  iteratively 
updates the load flow $\xx_{p}$ for every path $p\in \mathcal{P}$, where $y_i  =\xx_{dummy_i}$  for the variables $y_i$ in {\bf Model \ref{model:link}},  $i=1,\cdots, \LL$.
.   When it terminates,  the values of path load  flows $\xx_{p}$ for all $p\in \mathcal{P}$ give an  optimal solution  for linear programming problem in  {\bf Model \ref{model:link}}.

\begin{mydef}
	Let  $e^{\mbox{th}}$  is the index of edge $e$ in $\ee$. We introduce
  edge $e$'s basic vector $\bb_e$  for $e\in \ee$,
  as follows: 
   \[\begin{array}{lcl}
   \bb_e[j]&=& 
   \begin{cases}
   1 & \mbox{ if } j=\LL+e^{\mbox{th}} \\
   0 & \mbox{ otherwise}
   \end{cases}
   \end{array} 
   \]
   $e^{\mbox{th}}$ is the index of edge $e$ in $\ee$.  In addition, we introduce  path $p$'s basic vector $\bb_p$
  for $p\in \PP_i, i=1,\cdots, \LL$, as follows:
  \[\begin{array}{lcl}
  \bb_p[j]&=& 
  	\begin{cases}
  	1 & \mbox{ if } j=i\\
	  	1 & \mbox{ if } j=\LL+e^{\mbox{th}}\mbox{ and }  \proj_{p_i,e}=1 \\
  	0 & \mbox{ otherwise}
  	\end{cases}
   \end{array} 
  	\]
     We define  $\bb_{dummy_i}$ as follows:
       \[\begin{array}{lcl}
     \bb_{dummy_i}[j]&=& 
     \begin{cases}
     1 & \mbox{ if } j=i\\
     0 & \mbox{ otherwise}
     \end{cases}
     \end{array} 
     \]
\end{mydef}

\begin{example}
	In Fig. \ref{example:1}, let $e=(3,5)$, then $\bb_e=[0,0,0,1,0,0,0,0,0,0,0,0,0,0,0]$. Let $p={1\rightarrow 3\rightarrow 5\rightarrow 7 \rightarrow 9}\in \PP_1$, then $\bb_p=[1,0,1,1,1,1,0,0,0,0,0,0,0,0,0]$.
\end{example}

For an assignment  $\xx_p$ of $p\in \PP_i$ and $i=1,\cdots,\LL$, the value   $\min\{ \dd(e) -\sum_{i=1}^{\LL}
\sum_{p_1\in \PP_i} \proj_{p_1,e}\xx_{p_1}\mid \proj_{p,e}=1\}$ is called the   {\em remaining capacity} of $p$, denoted by $\textit{RemainCapacity}(p)$. 
We say that $p$'s {\em remaining capacity  carries commodity}  $D_i$  if and only if $p\in \PP_i$ and $d_i$ is less or equal to $\textit{RemainCapacity}(p)$.

\begin{algorithm}
  \SetAlgoCaptionSeparator{.}
  \DontPrintSemicolon
  \KwIn{ $G(\vv,\ee), \dd,\ 　\ww,\ D_i=(s_i,t_i,d_i)$ for $i=1,\cdots, \LL$  }
  \KwOut{$\{\xx_{p}\mid p\in \mathcal{P}\}$  an optimal solution for {\bf Model \ref{model:link}}.}
  \tcc{Initial solution.}
 
  \For {$i \in \{1,\cdots,\LL\}$}
	   {
 \label{init:start}
		 Compute a shortest path $p_i$ in $G(\vv,\ee)$  such that $\textit{RemainCapacity}( p_i)$  carries   $D_i$;\;
		 \lIf{$p_i$ exists}{set $\xx_{p_i}=d_i$; \tcc*{Reduce remaining capacity of edge $e$ remaining capacity of an edge not defined by $d_i$ where $e$ belongs to $p_i$; }}
		 \lElse{set $p_i=dummy_i,\ \xx_{dummy_i}=d_i$;}	
	   }
	    \label{init:end}
	   $k=1;\delta_i=0,i=1,\cdots,\LL; \ww'=\ww$;\tcc*[f]{Set temperary link weight vairable.}\;
	   $\uu[e]= 0, e\in \ee$;\tcc*[f]{Dual value for link.}\;

	   \tcc{Main iteration procedure.}
	   \For{$k>0$}
		     {\label{itertation:start}
		 	   \tcc{Choose entering variable.}
			   Let $e^*=\argmin_{e\in \ee}{\uu[e]}$;\;
			   \lIf{$\uu[e^*]<0$ \tcc*{Entering variable is a link $e^*$.}}
			       { $e^*$ is the entering variable;} 
			       \Else{  \tcc*{Entering variable is a path.}
				     \For {$i \in \{1,\cdots,\LL\}$}
				          {
				  	        Compute a shortest path $p_i'$ from $s_i$ to $t_i$ by  weight $\ww'$;\;
				  	        Let $\delta_i=\PPW_{p_i'}-\PPW_{p_i}$;\;
				          }
				          Let $j=\argmin_{i=1}^{\LL}{\delta_i}$;
				          \lIf{$\delta_j>=0$}{\Return $\{\xx_{p}\mid p\in \mathcal{P}\}$ }
				          \lElse{
				  	        $p_j'$ is the entering variable;
				          }
				   }
		 		   $\bb$ is the basic vector for the entering variable;\; 
		 		   \tcc{Choose leaving variable. The leaving variable is primary path $p_{k,i}$ or path $q\in \QQ_{k,i}$ or a link $e$. }
		 		   Apply classical pivot rule in 
		 		   \emph{simplex method} to choose leaving variable; \;
		 		   Update basic matrix;\;
			 	   Compute dual values $\uu$ and update new link weight map $\ww'=\ww+\uu$;	\;
			 	   $k=k+1$;\;		
			 	   \label{itertation:end}	 
		     }
	         
		     \caption{\SCG\label{alg:standard}}
\end{algorithm}

\subsection{Transition System Model}\label{subsec:trans}
To help the understanding and analysis of Algorithm \SCG, we introduce a state transition system that models the state change by each iteration of the main loop of the algorithm, i.e. {\bf lines \ref{itertation:start} - \ref{itertation:end}} of Algorithm \SCG. To define the abstract states of the transition system, we need the invariant property of the algorithm in the following lemma. 

\begin{lemma}
	Constraint (\ref{model:commodity}) in {\bf Model \ref{model:link}} is an invariant  of the main loop in  Algorithm \SCG\label{lemm:1} ({\bf lines \ref{itertation:start} - \ref{itertation:end}}).
\end{lemma}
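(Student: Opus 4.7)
The plan is to proceed by induction on the iteration counter $k$ of the main loop, taking the output of the initialization block (\textbf{lines \ref{init:start}--\ref{init:end}}) as the base case and one pivot step as the inductive step. Before anything else, I would record two structural observations about the basic vectors introduced in the definition just above the algorithm: (i) for every $p\in \PP_i \cup\{dummy_i\}$, the entry $\bb_p[i]$ (respectively $\bb_{dummy_i}[i]$) equals $1$ while $\bb_p[j]=0$ for every commodity index $j\neq i$; (ii) for every edge $e\in\ee$, $\bb_e[j]=0$ for all commodity indices $j\in\{1,\dots,\LL\}$. These two facts say that the first $\LL$ rows of the coefficient matrix encode exactly the commodity constraints (\ref{model:commodity}), with the block corresponding to paths (plus the dummy paths) acting as a $\{0,1\}$ indicator of the commodity, and the block corresponding to edge slacks contributing nothing to those rows.

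For the base case, I would simply check that the initialization loop produces, for each commodity $D_i$, either an assignment $\xx_{p_i}=d_i$ with $p_i\in \PP_i$ (and all other $\xx_{p}$ for $p\in \PP_i$ together with $y_i=\xx_{dummy_i}$ equal to $0$), or the assignment $y_i=\xx_{dummy_i}=d_i$ (with every $\xx_p$ for $p\in \PP_i$ equal to $0$). In both subcases $\sum_{p\in \PP_i}\xx_p + y_i = d_i$, so (\ref{model:commodity}) holds before the first execution of \textbf{line \ref{itertation:start}}.

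For the inductive step, assume the invariant holds at the start of iteration $k$. The body of the loop selects an entering variable and then applies the standard simplex pivot rule (\textbf{line} where \emph{simplex method} is invoked) to pick a leaving variable and update the basic matrix; all non-basic variables remain at $0$. Writing the constraint system of {\bf Model \ref{model:link}} in standard form as $A\mathbf{z}=\mathbf{b}$, with $\mathbf{z}=(\xx,y,\mathbf{s})$ where $\mathbf{s}$ is the vector of edge slack variables, observation (i)--(ii) says that the first $\LL$ rows of $A$ correspond precisely to the $\LL$ equations in (\ref{model:commodity}). Since the standard simplex pivot maintains $A\mathbf{z}=\mathbf{b}$ (the entering column, expressed in the current basis, is used to update the basic variables so that the resulting $\mathbf{z}$ still satisfies every equality constraint), each of the first $\LL$ rows still evaluates to $d_i$ after the update. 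Specializing this to row $i$ and invoking observation (ii) to drop the slack contributions gives $\sum_{p\in \PP_i}\xx_p+y_i=d_i$ again, which closes the induction.

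The main obstacle, as I see it, is not algebraic but notational: the pivot step is described in the algorithm at the abstract level ``apply the classical pivot rule,'' so the proof must explicitly identify the quantities of {\bf Model \ref{model:link}} with the ingredients of a standard-form simplex tableau and verify that the basic vectors from the preceding definition are indeed the columns of $A$ restricted to the current basis. Once that correspondence is spelled out, preservation of the equality constraints is immediate from the well-known invariance of $A\mathbf{z}=\mathbf{b}$ under simplex pivots, and no further case analysis on whether the entering variable is a path, a dummy path, or an edge slack is needed.
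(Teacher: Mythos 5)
Your proposal is correct and rests on the same key fact as the paper's own (one-line) proof: a simplex pivot keeps the current iterate feasible for the equality constraints, and the first $\LL$ rows of the constraint system are exactly the equations (\ref{model:commodity}). You simply make explicit what the paper leaves implicit — the base case from the initialization block and the identification of the basic vectors' first $\LL$ coordinates with the commodity rows — so no substantive difference in approach.
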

\begin{proof}
The lemma holds because of the fact that the values of the variables $\{\xx_p~|~ p\in \mathcal{P}\}$ are alway kept in their feasible area is an invariant of the simplex method.\qed
\end{proof}
Since $d_i>0 $, {\bf  Lemma \ref{lemm:1}} implies that for each iteration, say the $k$th iteration, 
there is at least one $p\in (\PP_i\cup\{dummy_i\})$  for $i=1,\cdots,\LL$ such that
$\xx_p> 0$.  For the  $k$th iteration and  commodity $D_i$, a  path $p_{k,i} \in \PP_i\cup\{dummy_i\}$ which  has positive flow can be selected as the {\em primary path}  and the subset $\QQ_{k,i}\subseteq \PP_i\setminus\{p_{k,i}\}$ of paths which have non-zero flow as the {\em secondary paths} of $D_i$,   where $k, i=1, \ldots, l$. 

We now describe the main loop of Algorithm \SCG\ as  the transition system such that the $k$th iteration  changes from a state of the form  $([(p_{k,1}, \QQ_{k,1}),\cdots, (p_{k,\LL},\QQ_{k,\LL})],\NN_{k})$  to a state $([(p_{k+1,1}, \QQ_{k+1,1}),\cdots, (p_{k+1,\LL},\QQ_{k+1,\LL})],\NN_{k+1})$ 
 where  $ \NN_{k}, \NN_{k+1}\subseteq  \ee$. 

After initial solution steps in Algorithm \SCG\  (line \ref{init:start} to line \ref{init:end}), the system state is $p_{1,i}=p_i,\ \QQ_{1,i}=\emptyset$  for $i=1,\cdots, \LL$ and $\NN_1=\ee$.
The transition rules are defined in the following way. 

\begin{framed}
	
  \begin{enumerate}
		
  \item When the entering variable is a link $e^*$  :
	\begin{enumerate}
	\item When the leaving variable is a path $p_{k,i}$:
	  
	  By {\bf Lemma \ref{lemm:1}}, there is a $p\in \QQ_{k,i}$. Let $p_{k+1,i}=p,$ $\QQ_{k+1,i}=\QQ_{k,i}\setminus\{p \}$ and $ \NN_{k+1}=\NN_k\cup\{e^*\}$, the other $(k+1)$th's 
	  state are the  same as $k$th's state. In the following description,  without loss of generality we do not mention the unchanged state part. 
	\item When the leaving variable is a path $p\in \QQ_{k,i}$:
	  
	  Let $\QQ_{k+1,i}=\QQ_{k,i}\setminus\{p \},$ $ \NN_{k+1}=\NN_k\cup\{e^*\}$.
	\item When the leaving variable is a link $e$:
	  
	  Let $ \NN_{k+1}=(\NN_k\cup\{e^*\})\setminus\{e\}$.

	\end{enumerate} 
  \item When the entering variable is a path $p'_j$:
	\begin{enumerate}
	\item When the leaving variable is a path $p_{k,i}$:
	  \begin{enumerate}
	  \item When $i=j$:
		
		Let $p_{k+1,i}=p'_j$.
	  \item When $i\neq j$:
		
		By {\bf Lemma \ref{lemm:1}}, there is a $p\in \QQ_{k,i}$. \\   Let $p_{k+1,i}=p,$ $\QQ_{,k+1,i}=\QQ_{k,i}\setminus\{p \},$ $\QQ_{k+1,j}=\QQ_{k,j}\cup\{p_j'\}$. 

	  \end{enumerate}
	\item When the leaving variable is a path $p\in \QQ_{k,i}$:
	  \begin{enumerate}
	  \item When $i=j$:
		
		Let	 $\QQ_{k+1,i}=(\QQ_{k,i}\cup \{p_j'\})\setminus\{p \} $. 
	  \item When $i\neq j$:
		
		Let $\QQ_{k+1,j}=\QQ_{k,j}\cup\{p_j'\},$ $ \QQ_{k+1,i}=\QQ_{k,i}\setminus\{p\}$.
	  \end{enumerate}	
	\item When the leaving variable is a link $e$:
	  
	  Let $\QQ_{k+1,j}=\QQ_{k,j}\cup \{ p_j'\}, $ $ \NN_{k+1}=\NN_k\setminus\{ e\}$.

	\end{enumerate}
  \end{enumerate}
  \captionof{figure}{Transition system rules  for   \SCG. \label{sys:trans}}
\end{framed}
It is easy to see that state $([(p_{k,1},\QQ_{k,1}),\cdots, (p_{k,\LL},\QQ_{k,\LL})],\NN_k)$ represents the basic  matrix $\AA_k$ in the $k$th iteration,
where
\begin{equation} 
\AA_k=[\bb_{p_{k,1}}, \underbrace{\bb_{p}}_{p\in \QQ_{k,1}}, \cdots, \bb_{p_{k,\LL}},\underbrace{\bb_{p}}_{p\in \QQ_{k,\LL}},\underbrace{\bb_e}_{e\in \NN_k}]\label{eq:matrix}
\end{equation}
In other words, $\AA_k$ is the incidence matrix of paths  $p_{k,1}, \QQ_{k,1},\cdots, p_{k,\LL},\QQ_{k,\LL}$ and edges in $ \NN_k$

\begin{mydef}

In the above transition system, if the entering variable is a path $p$ and the leaving variable is a link $e$, then we call $p$   a basic variable which corresponds to $e$, denoted as $p_e$.

\end{mydef}
The variable $p_e$ has some update rules.
 When the entering variable is a path $p$ and the leaving variable is a path $p_e$, then we update $p_e=p$.  If the entering variable is a link $e_1$ and leaving variable is a path $p_e$, then update $p_e=p_{e_1}$.

\begin{note}
Let	$\SS_k=\ee  \setminus \NN_k,\  \QQ_k=\bigcup_{i=1}^{\LL} \QQ_{k,i}. $  We call $\SS_k$ is the set of   saturated link.
	\label{note:s}
\end{note}
 The intuitive meaning of a  saturated link is that its  bandwidth has been fully taken up and its bandwidth restricts the objective function to further decrease under current basis.

\begin{lemma}
	$|\QQ_k|=|\SS_k|$ is an invariant of the main loop, in other words,
 there  is a path $p_e\in \QQ_k$ for each $  e\in \SS_k$.
\end{lemma}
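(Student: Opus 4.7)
The plan is to establish both assertions simultaneously by induction on the iteration index $k$, obtaining the cardinality equality from the dimension of the basis matrix and the existence of $p_e$ from the transition and update rules.

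For the cardinality equality $|\QQ_k|=|\SS_k|$, I would argue directly, without induction, by counting columns of $\AA_k$. After introducing edge-slack variables to convert the capacity inequalities (\ref{model:edge}) into equalities, Model \ref{model:link} becomes a standard-form linear program with exactly $\LL+|\ee|$ equality constraints. Hence the simplex basis $\AA_k$ must be a square invertible matrix of order $\LL+|\ee|$. Reading off (\ref{eq:matrix}), $\AA_k$ consists of $\LL$ primary-path columns, $|\QQ_k|$ secondary-path columns, and $|\NN_k|$ edge-slack columns. Equating totals yields
\[
\LL+|\QQ_k|+|\NN_k|=\LL+|\ee|,
\]
so $|\QQ_k|=|\ee|-|\NN_k|=|\SS_k|$ by Note \ref{note:s}.

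For the existence of a path $p_e\in \QQ_k$ for every $e\in \SS_k$, I proceed by induction on $k$. The base case $k=1$ is vacuous: after initialization (lines \ref{init:start}--\ref{init:end}) we have $\NN_1=\ee$, so $\SS_1=\emptyset=\QQ_1$. For the inductive step I examine each of the six cases in Figure \ref{sys:trans} together with the three update rules for $p_e$ stated after Note \ref{note:s}. In each case I check that $|\SS|$ and $|\QQ|$ change by matching amounts (which must hold, by the dimension argument above) and that the map $e\mapsto p_e$ transfers correctly: when a path enters and a link $e$ leaves (Case 2c) the entering path is registered as the new $p_e$; when a path enters and some $p_e$ leaves (parts of Cases 2a--2b) the symbol $p_e$ is overwritten by the entering path; and when a link $e_1$ enters and some $p_e$ leaves (parts of Cases 1b--1c) the formerly registered $p_{e_1}$ is reassigned to $e$, the edge $e_1$ having just left $\SS_k$.

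The main obstacle is Case 1a, where the leaving variable is a primary path $p_{k,i}$ rather than a link or a secondary $p_e$, so none of the three published update rules apply verbatim. I would handle this by observing that $e^*$ simultaneously leaves $\SS_k$, which releases the old $p_{e^*}$ from bijection duty, while one secondary $p\in \QQ_{k,i}$ is promoted to primary; if $p$ had been serving as $p_{e'}$ for some $e'\in \SS_k$ with $e'\neq e^*$, I reassign the freed $p_{e^*}$ to that role. Together with the cardinality identity this yields a well-defined map from $\SS_{k+1}$ to $\QQ_{k+1}$ that, being an injection between equinumerous sets, is a bijection, completing the induction.
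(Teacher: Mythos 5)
Your proof is correct, but it reaches the cardinality identity by a genuinely different route than the paper. The paper proves $|\QQ_k|=|\SS_k|$ by induction on $k$, running through the transition rules of Fig.~\ref{sys:trans} case by case and checking that $|\QQ_k|$ and $|\SS_k|$ change by the same amount under each rule (both drop by one under 1-(a) and 1-(b), both are unchanged under 1-(c), 2-(a), 2-(b), both grow by one under 2-(c)); it does not construct or track the map $e\mapsto p_e$ at all. You instead get the equality in one stroke from LP basics: the basis $\AA_k$ of the standard-form program with $\LL+|\ee|$ equality constraints is square, and counting its columns as displayed in (\ref{eq:matrix}) gives $\LL+|\QQ_k|+|\NN_k|=\LL+|\ee|$, hence $|\QQ_k|=|\SS_k|$. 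Your argument is shorter, rule-independent (it survives any change to the case enumeration), and makes explicit that the lemma is really a consequence of the basis being square; the paper's induction is self-contained within its transition-system formalism and does not lean on the identification of $\AA_k$ with a simplex basis. Your second half, which chases the assignment $e\mapsto p_e$ through the update rules and patches the promotion case 1-(a), is extra work the paper never does --- the paper reads ``there is a path $p_e\in\QQ_k$ for each $e\in\SS_k$'' as synonymous with the cardinality statement, which for bare existence of a bijection between finite sets it is. Your care there is welcome but not required to match what the paper actually establishes; note only that the same promotion issue you flag in case 1-(a) also arises in case 2-(a)-ii, and your final ``injection between equinumerous sets'' fallback covers both.
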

\begin{proof}
	We prove it by induction. When $k=0$, it  obviously holds. Assume that the conclusion holds when $k\le K_1$.
	
	When $k=K_1+1$,  if rules 1-(a)  and 1-(b) are used in Fig. \ref{sys:trans}, then both cardinal of $\QQ_k$ and $\SS_k$ decrease by $1$
	compared with last iteration. Hence conclusion holds. If rules  1-(c), 2-(a)-i and 2-(b) are used, then both  cardinal of $\QQ_k$ and $\SS_k$ are unchanged.  Hence conclusion holds. If rule 2-(a) is used, then  both  cardinal of $\QQ_k$ and $\SS_k$ increase by $1$.
	 Thus, conclusion holds. In summary, no matter what rule is used in $k$-th, 	$|\QQ_k|=|\SS_k|$  holds.
\qed
\end{proof}

\subsection{Matrix Formulation}\label{subsec:mat}

We  fix working paths on $p_1,\cdots, p_{\LL}, \QQ_1,\cdots, \QQ_{\LL} $ and  add  slack variables $z_e$ for constraint (\ref{model:edge}) where $e\in \NN_k$, then \MMCF\ can be described as follows:

\begin{model}[Link-Path Formulation for augmenting network\label{model:new}]
\begin{align}
  \MIN \quad & \sum_{i=1}^{\LL} \left(\ww_{p_i}\xx_{p_i} + \sum_{p\in \QQ_i}\ww_{p} x_{p}\right) & \label{matrix:opt} \\
  \mbox{s.\ t.}\quad  & \xx_{p_i}+\sum_{p\in \QQ_i} \xx_{p} = d_i, &  i =1,\cdots, \LL &\label{matrix:commodty} \\
    &\sum_{i=1}^{\LL}\left(\xx_{p_i}\proj_{p_i,e}+\sum_{p \in \QQ_i} \xx_{p}  \proj_{p,e} \right) = \dd(e),& e\in \SS_k & \label{matrix:edge1}\\
  &\sum_{i=1}^{\LL}\left(\xx_{p_i}\proj_{p_i,e}+\sum_{p \in \QQ_i} \xx_{p}  \proj_{p,e} \right)+z_e = \dd(e),& e\in \NN_k & \label{matrix:edge}\\
  &\xx_p\ge 0, &  p \in \bigcup_{i=1}^{\LL} \left(\QQ_i \cup \{p_i\}\right) & \label{matrix:var1}\\
  & z_e\ge 0, &  e\in  \NN_k   &              \label{matrix:var3}
\end{align}
\end{model}

\begin{note}
	$\cc_k=[\ww_{p_{k,1}},\underbrace{\ww_{p}}_{p\in \QQ_{k,1}},\cdots,\ww_{p_{k,\LL}}, \underbrace{\ww_{p}}_{p\in \QQ_{k,\LL}},
	\underbrace{0,\cdots, 0}_{|\NN_k|}].$		 
	$\rhs=[d_1,\cdots,d_{\LL},\underbrace{\dd(e)}_{e\in \ee}]$.
	
\end{note}

\begin{example}
	In Fig. \ref{example:1}, $\rhs=[10,11,10,10,10,10,10,15,8,10,10,7,10,5,10]$
\end{example}
In other words, {\bf Model \ref{model:new}} can be written as 
\begin{model}[Matrix formulation \label{model:matrix}]
	\begin{align*}
	\MIN\quad  &\cc_k\xx\\
	s.\ t. \quad &\AA_k\xx=\rhs.
	\end{align*}
	where $\AA_k$ is defined as (\ref{eq:matrix}).
\end{model}

In $k$th iteration the  leaving variable selection procedure in Algorithm \SCG\ 　 can been described as follows:
\begin{framed}	
\begin{empheq}[box=\mymath]{equation}
\AA_k \xx=\rhs \label{exit:flow} 
\end{empheq}

 Firstly, solve equation (\ref{exit:flow}), and obtain  \[\xx=[\xx_{p_{k,1}},\underbrace{\xx_{p}}_{p\in \QQ_{k,1}},
  \cdots, \xx_{p_{k,\LL}},\underbrace{\xx_{p}}_{p\in \QQ_{k,\LL}},\underbrace{z_e}_{e\in \NN_k}]\]  which satisfy constraints
(\ref{matrix:commodty})-(\ref{matrix:var3}).

Secondly, solve equation (\ref{exit:coef}), and obtain $\ll$.

\begin{empheq}[box=\mymath]{equation}
	  \AA_k \ll= \bb \label{exit:coef} 
\end{empheq}

where vector $\bb$ is the basic vector corresponding to entering variable.

Finally, choose a leaving variable by a \textit{pivot rule}. 

\end{framed}

\begin{note}[pivot rule]
  There are many different mays to choose leaving variable. In this paper, we apply classical pivot rule to pick $j$th  as leaving variable where

  \[j=\argmin_{i=1,\ll_i>0}^{\LL+|\ee|}\frac{\xx_i}{\ll_i}. \]
\end{note}

The solution $\uu$ of equation (\ref{exit:dual}) are dual values of   constraints (\ref{matrix:commodty}) and (\ref{matrix:edge}).	Then let
$\ww'=\ww+\uu$ be newly updated link weights.

\begin{empheq}[box=\mymath]{equation}
\left(\AA_k \right)^{\intercal}\uu =-\cc_k\label{exit:dual}	
  \end{empheq}

\subsection{Classical Column Generation Complexity Analysis}
\label{sec:stdana}

Suppose Algorithm \SCG\ does $h$ main iterations before termination, then Algorithm \SCG\
computes  $(h+1)\LL$ shortest path and  solves $3h$  linear equation systems in the form of  (\ref{exit:flow}), (\ref{exit:coef}) and (\ref{exit:dual}) where  $\AA_k$'s  size is $(\LL+|\ee|)\times (\LL+|\ee|)$.
As the authors know that the best shortest path  algorithm complexity is $O\left(|\ee| + |\vv| \log (|\vv|)\right)$ which is given by  Dijkstra's algorithm 
based  on Fibonacci heap and the best linear system solving algorithm complexity is $O\left( \left(\LL+|\ee| \right)^{2.376}\right)$. Hence, the Algorithm  \SCG's complexity is 
\begin{equation}
  O\left( h\left( \LL\left(|\ee| + |\vv| \log (|\vv|)\right) +  \left(\LL+|\ee| \right)^{2.376} \right) \right).\label{complex:1}
\end{equation}


\section{Speedup  Through Employing $\AA_k$'s Structure}
\label{sec:struct}

The complexity of (\ref{complex:1}) can not be accepted in reasonable time when the size of $G(\vv,\ee)$  is large. This hinders \SCG's  use
in some applications e.g.  online load balance  in {\bf SDN} and large scale problem  offline. Hence, how to improve the efficiency of column generation is a  problem considered in many works \cite{dantzig1967generalized,mccallum1977a,barnhart1994a,mamer2000a}.  The complexity  (\ref{complex:1}) only has two parts, i.e. computing shortest  path and solving linear equation systems (\ref{exit:flow}), (\ref{exit:coef}) and (\ref{exit:dual}). 　Hence, reducing coefficient matrix size is a feasible approach. Luckily,   the primal partitioning procedure, a specialization
of the generalized upper bounding procedure developed by Dantzig and
Van Slyke \cite{dantzig1967generalized}, involves the determination at each iteration of the inverse of a basis
containing only one row for each saturated link. In other words, we can reduce  matrix size to the number of saturated link.  In the following, we will concretely 
show how to apply conclusion of \cite{dantzig1967generalized}  on \MCF. Through
reordering column of  basis matrix    to obtain a special structure in resulted  basis matrix  $\AA_k$, we give  bellow a method  called \emph{structured matrix method} (\SMCG). By this way  we can reduce
the size of   linear equation to be solved   in general.

\subsection{Structured Matrix Method for Column Generation}

After we reorder basic variable in $k$th iteration by $p_{k,1},\cdots, p_{k,\LL}, \underbrace{p}_{p\in \QQ_{k,1}},\cdots, \underbrace{p}_{p\in \QQ_{k,\LL}},\underbrace{e}_{e\in \NN_k}$, 
{\bf Model~\ref{model:new}} can be rewritten as:


\begin{model}[Structure matrix model \label{model:structed}]
\begin{align*}
  \MIN\quad  &\cc_k\xx\\
 s.\ t.\quad &\AA_k\xx=\rhs_k
 \end{align*}
where
\begin{align}
  &\AA_k &=&
  \bbordermatrix{~  &\KK & \SS_k & \NN_k \cr
	\KK & \II& \BB_{k} & \zero\cr
	\SS_k& \CC_{k}  & \DD_{k} & \zero\cr
	\NN_k& \HH_{k}   & \FF_{k} & \II\cr
  } \label{eq:factor}\\
  &\cc_k &=& [\underbrace{ \ww_{p_i}}_{ i=1,\cdots,\LL}, \underbrace{\ww_{p_e}}_{e\in \SS_k} ,\underbrace{0,\cdots,0}_{e\in \NN_k}] =[{\cc}_{\KK}, {\cc}_{\SS_k},{\zero}_{\NN_k}] \nonumber\\
  &\rhs_k &=& [\underbrace{d_i}_{i=1,\cdots,\LL}, \underbrace{\dd(e)}_{e\in \SS_k}, \underbrace{\dd(e)}_{e\in \NN_k}]=[{\rhs}_{k,\KK},{\rhs}_{k,\SS_k},{\rhs}_{k,\NN_k}] \nonumber \\
  &\xx &=& [\underbrace{\xx_{p_i}}_{i=1,\cdots,\LL},  \underbrace{\xx_{p_e}}_{e\in \SS_k}, \underbrace{\xx_e}_{e\in \NN_k} ] = [\xx_{\KK}, \xx_{\SS_k}, \xx_{\NN_k}] \nonumber\\
  & \bb &=&[\bb_{\KK}, \bb_{\SS_k},\bb_{\NN_k}]\nonumber\\
   & \ll &=&[\ll_{\KK}, \ll_{\SS_k},\ll_{\NN_k}]\nonumber\\
  & \uu &=& [\uu_{\KK},\uu_{\SS_k},\uu_{\NN_k}] \nonumber\\
  & \II\ \mbox{ is an identity matrix} \nonumber \\
  &\xx\ge 0 \nonumber
\end{align}

\end{model}

 Mathematically, we can rewrite equation (\ref{exit:flow})  into:
\begin{align}
  &\xx_{\KK}+\BB_k\xx_{\SS_k}&=&\rhs_{k,\KK} \label{eqn:1}\\
  &\CC_k\xx_{\KK}+\DD_k \xx_{\SS_k}&=&\rhs_{k,\SS_k} \label{eqn:2}\\
  &\HH_k\xx_{\KK}+\FF_k\xx_{\SS_k} +\xx_{\NN_k} &=& \rhs_{k,\NN_k} \label{eqn:3}
\end{align} 
Then we have
\begin{equation}
  \CC_k\times (\ref{eqn:1})-(\ref{eqn:2})\implies \left(\CC_k \BB_k-\DD_k  \right)\xx_{\SS_k}=\CC_k\rhs_{k,\KK}-\rhs_{k,\SS_k}  \label{eq:simple1}
\end{equation}
 Hence, we can firstly solve equation (\ref{eq:simple1}). Secondly, substituting $\xx_{\SS_k}$ in (\ref{eqn:1}) to obtain  $\xx_{\KK}$. Finally, 
substituting $\xx_{\KK}, \xx_{\SS_k}$ 	in (\ref{eqn:3}) to obtain 
$\xx_{\NN_k}$. In this way can solve equation system (\ref{exit:flow}).

\begin{note}\label{note:g}
Let
\begin{empheq}[box=\mymath]{equation}
 \UU_k=\CC_k \BB_k-\DD_k  \label{eq:M}
 \end{empheq}

\end{note}
Now equation (\ref{eq:simple1}) can be written as:
\begin{empheq}[box=\mymath]{equation}
\UU_k\xx_{\SS_k}=\CC_k\rhs_{k,\KK}-\rhs_{k,\SS_k}  \label{eq:simple}
\end{empheq}

\begin{lemma}
	$\UU_k$ is a non-singular sparse matrix. \label{lemma:unsing}
\end{lemma}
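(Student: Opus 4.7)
The plan is to prove non-singularity via a Schur-complement (block-determinant) argument applied to the special block structure of $\AA_k$ exhibited in equation (\ref{eq:factor}), and to prove sparsity by tracing the combinatorial interpretation of the entries of $\CC_k\BB_k$ and $\DD_k$ as path/edge incidences.

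For non-singularity, I would start from the fact that $\AA_k$ is a simplex basis matrix, hence invertible, so $\det(\AA_k)\ne 0$. The block form
\[
\AA_k=\begin{pmatrix}\II & \BB_k & \zero\\ \CC_k & \DD_k & \zero\\ \HH_k & \FF_k & \II\end{pmatrix}
\]
has a zero northeast block and an identity southeast block, so expanding along the last block column (or equivalently, using the Schur complement of the bottom-right $\II$) reduces the determinant to
\[
\det(\AA_k)=\det\!\begin{pmatrix}\II & \BB_k\\ \CC_k & \DD_k\end{pmatrix}.
\]
Now taking the Schur complement of the top-left $\II$ gives
\[
\det(\AA_k)=\det(\DD_k-\CC_k\BB_k)=\det(-\UU_k)=(-1)^{|\SS_k|}\det(\UU_k),
\]
so $\det(\UU_k)\ne 0$. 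This part is routine; the only subtle point is making sure the two Schur-complement reductions are justified by the block-triangular form, which they are because both pivot blocks are the identity.

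For sparsity, I would read off the combinatorial meaning of each factor. A column of $\BB_k$ is indexed by a secondary path $p\in\QQ_{k,i}$, and by the definition of $\bb_p$ restricted to the commodity rows $\KK$ it has exactly one nonzero entry, namely a $1$ in row $i$; call this commodity $i(p)$. Hence
\[
(\CC_k\BB_k)_{e,p}=(\CC_k)_{e,\,i(p)}=\mathbf{1}\!\left[e\in p_{k,i(p)}\cap\SS_k\right],
\]
while $(\DD_k)_{e,p}=\mathbf{1}[e\in p\cap\SS_k]$. Therefore every entry of $\UU_k$ lies in $\{-1,0,1\}$, and the number of nonzeros in the column of $\UU_k$ indexed by $p$ is bounded by $|p_{k,i(p)}\cap\SS_k|+|p\cap\SS_k|$, i.e.\ by the combined length (restricted to saturated links) of the primary path of $i(p)$ and of $p$ itself. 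Since commodity paths in the networks considered are simple and typically short compared to $|\SS_k|$, this yields the sparsity claim in the same qualitative sense used elsewhere in the paper (and matches the empirical observation in Section \ref{section:sparse} that each row/column has only a handful of nonzero entries).

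The main obstacle I anticipate is not the non-singularity part, which is a short block-matrix computation, but rather stating the sparsity claim precisely. The lemma asserts sparsity without a quantitative bound, so I would either give the column-support bound above and remark that it matches the experimental observations, or defer a formal quantitative statement to Section \ref{section:sparse}. In writing up the proof I would emphasize that sparsity of $\UU_k$ is inherited from the facts that $\BB_k$ has exactly one nonzero per column and that each column of $\CC_k$ and $\DD_k$ encodes a path's intersection with $\SS_k$, so no arithmetic cancellation is needed beyond the natural $\{-1,0,1\}$ structure.
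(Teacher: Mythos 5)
Your proposal is correct and, for the non-singularity claim, takes essentially the same route as the paper: the paper's proof is just the two-line version of your argument (``$\AA_k$ is a simplex basis, and by the block structure $\det(\AA_k)=\det(\UU_k)\neq 0$''), while you fill in the two Schur-complement reductions and correctly record the sign $(-1)^{|\SS_k|}$ that the paper silently drops (harmless for non-singularity). The one substantive difference is scope: the paper's proof of this lemma does not address sparsity at all --- that claim is only supported heuristically and empirically in Section~\ref{section:sparse} --- whereas your combinatorial bound on the column supports of $\CC_k\BB_k-\DD_k$ essentially reconstructs that discussion and turns it into an actual (qualitative) argument, which is a modest improvement over what the paper offers.
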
 
\begin{proof}
	By simplex method theory, $\AA_k$ is a non-singular 
	matrix. By structure of  $\AA_k$ in (\ref{eq:factor}), $\det\left(\AA_k\right)=\det\left(\UU_k\right)\neq 0$. Therefore,
	$\UU_k$ is a non-singular matrix.\qed
\end{proof}
	Since equations (\ref{exit:flow}) and (\ref{exit:coef}) have 
	the same coefficient matrix. Hence,
	employing the way to solve  equation (\ref{exit:flow}), we can  solve  equation (\ref{exit:coef}).  Through the same method we obtain
\begin{empheq}[box=\mymath]{equation}
		\UU_k\ll_{\SS_k}=\CC_k\bb_{\KK}-\bb_{\SS_k}  ,\label{eq:simplexcoef}
\end{empheq}
	 $\ll_{\KK} =\bb_{\KK}-\BB_k \ll_{\SS_k}  \mbox{ and }  \ll_{\NN_k}=\bb_{\NN_k}-\HH_k\ll_{\KK}-\FF_k\ll_{\SS_k}.$

In equation (\ref{exit:dual}),	mathematically, $\left(\AA_{k}\right )^T\uu= \cc_k$ can be rewritten as
	\begin{align}
	    &\uu_{\KK}+\CC_k^T\uu_{\SS_k} +\HH_k^T \uu_{\NN_k} &=&-\cc_{k,\KK} \label{duall:eq1}\\
		&\BB_k^T\uu_{\KK} +\DD_k^T \uu_{\SS_k}+\FF_k^T\uu_{\NN_k}&=&-\cc_{k,\SS_k}\label{dual:eq2}\\
		& \uu_{\NN_k}&=&\zero_{\NN_k}\nonumber
	\end{align}
We substitute $\uu_{\NN_k}=\zero_{\NN_k}$ in (\ref{duall:eq1}) and (\ref{dual:eq2}) to obtain 
	\begin{align}
	&\uu_{\KK}+\CC_k^T\uu_{\SS_k} &=&-\cc_{k,\KK} \label{duall:eq11}\\
	&\BB_k^T\uu_{\KK} +\DD_k^T \uu_{\SS_k}&=&-\cc_{k,\SS_k}\label{dual:eq22}.
	\end{align}
 We simplify equation systems (\ref{duall:eq11}) and (\ref{dual:eq22}) by
\begin{align}
\BB_k^T \times (\ref{duall:eq11})-(\ref{dual:eq22})&=\BB_k^T \uu_{\KK}+\BB_k^T \CC_k^T \uu_{\SS_k}- \BB_k^T \uu_{\KK}
-\DD_k^T\uu_{\SS_k} \nonumber \\ 
&\implies  \left(\BB^T \CC_k^T -\DD_k^T  \right) \uu_{\SS_k}= \cc_{k,\SS_k}-\BB_k^T\cc_{k,\KK}\nonumber \\
&\implies \nonumber
\end{align}
\begin{empheq}[box=\mymath]{equation}
\UU_k^T \uu_{\SS_k}  = \cc_{k,\SS_k}-\BB_k^T\cc_{k,\KK}
\label{dual:simple}
\end{empheq}

Through the above simplification, we can  solve equation (\ref{exit:dual}) too.

\subsection{Structure Matrix Method's  Complexity Analysis}

Suppose \SCG\ does $h$ main iterations before termination, then it
computes  $(h+1)\LL$ shortest path and in $k$th iteration we need  to solve $3$  linear equation systems in the form of (\ref{eq:simple}), (\ref{eq:simplexcoef}) and (\ref{dual:simple})  where  $\UU_k$'s  size is $|\SS_k|\times |\SS_k|$.
By the same discussion in Section \ref{sec:stdana}, we can obtain that 
the \emph{Structure Matrix Method} complexity is 
\begin{equation}
O\left( h\left( \LL\left( |\ee| + |\vv| \log (|\vv|)\right)  \right)+\sum_{k=1}^h |\SS_k|^{2.376}\right).\label{complex:stucture}
\end{equation}
As  given by the analysis in Section \ref{sec:stdana}, the standard column generation method is  
\[O\left( h\left( \LL\left(|\ee| + |\vv| \log (|\vv|)\right) +  \left(\LL+|\ee| \right)^{2.376} \right) \right)\] by (\ref{complex:1}). 
	By the  Note \ref{note:s},  it is easy to see that $|\SS_k|< \LL+|\ee|$. And  in general, $|\SS_k|< |\ee|$, hence \SMCG\  is better than the classical one (\SCG).

\section{Speedup Through Employing $\UU_k$'s Sparse Structure}
\label{section:sparse}
The sparse property is very useful when solving linear equation, in the following, to show the sparse property of matrix $\UU_k$ we will  discuss the element 
of matrix $\UU_k$ in detail.
 In $\UU_k=\CC_k \BB_k-\DD_k $,  $\CC_k[i]$ is a vector denoting  whether path $p_{k,i}$ 
 crosses each edge in $\SS_k$, i.e.
$$\CC_k[i][j] =\begin{cases}
 	1 \quad \text{ if }  jth \text{ edge of }  \SS_k  \text{ belongs to path   }p_{k,i},    \\
 	0 \quad  \text{ otherwise.} 
 \end{cases}$$
$\BB_k[i]$ is a vector  associated with $i$th  path of $[\QQ_{k,1},\cdots,\QQ_{k,\LL}]$ and its value indicates which commodity the  $i$th  path belong to. Let $i$th  path of $[\QQ_{k,1},\cdots,\QQ_{k,\LL}]$ be  path for
 commodity $D_h$. Then   
$$\BB_k[i][j] =\begin{cases}
1 \quad \text{ if } j=h, \\
0 \quad  \text{ otherwise.} 
\end{cases}$$ 
Hence, $\CC_k\BB_k[i]=\CC_k[h]$,  where $\CC_k[h]$ is associated with  path $p_{k,h}$ and 
$$\CC_k\BB_k[i][j] =\begin{cases}
1 \quad \text{ if } jth \text{ edge of }  \SS_k  \text{ belongs to path   }p_{k,h},    \\
0 \quad  \text{ otherwise.} 
\end{cases}$$

 $\DD_k[i]$　 is also a vector associated with  $i$th  path of $[\QQ_{k,1},\cdots,\QQ_{k,\LL}]$ and its value indicates
  which edge it crosses. Let path $p$ be the $i$th  path of $[\QQ_{k,1},\cdots,\QQ_{k,\LL}]$. Then,
   $$\DD_k[i][j] =\begin{cases}
	1 \quad \text{ if } jth \text{ edge of }  \SS_k  \text{ belongs to path   }p,    \\
	0 \quad  \text{ otherwise.} 
 \end{cases}$$

   By conclusion of \cite{fronczak2004average} the  ratio   between  path length and $|\ee|$  is very small
   when $|\ee|$ is large in general. When we see the graph as only consisted of saturated links, then number of nonzero elements in vector $\CC_k\BB_k[i]$ and $\DD_k[i]$  are identical with the   length of associated paths $p_{k,h}$ and $p$. Therefore,  $\CC_k\BB_k[i]$ and $\DD_k[i]$ are two sparse vectors.  As  discussed above,
  $\UU_k$ is statistically a very sparse matrix.
  
  In the following, we list some experiment results of matrix $\UU_k$. We record the dimension and number of  $\UU_k$'s nonzero elements  in every iteration for some random cases. Let $N(\UU_k)$ be the number of nonzero element in matrix $\UU_k$. The detail of
  cases' configuration can be found in section \ref{sec:exp}.  The dimension of $\UU_k$ is equal to number of saturated link in $k$th iteration. In Fig. \ref{fig:saturatelink} we can see that 
  the dimension starts from $0$ to a large number (more than $1000$), which indicates that the number of saturated links is more and more  larger as iteration proceeding, and the resource competition of different commodities 
   is more and more intense. But the  growth of the ratio between nonzero coefficients of  matrix $\UU_k$ and its dimension is very slow. In Fig. \ref{fig:mean1}  when  $k>500000$, the value $\frac{N(\UU_k)}{|\UU_k|}$ is still less than $5$ while $|\UU_k|$ is larger than $1000$.  Hence $\UU_k$ is a very sparse matrix.

  	\begin{figure*}
  		\centering
  		\begin{subfigure}[b]{0.475\textwidth}
  			\centering
  			\includegraphics[width=0.8\textwidth]{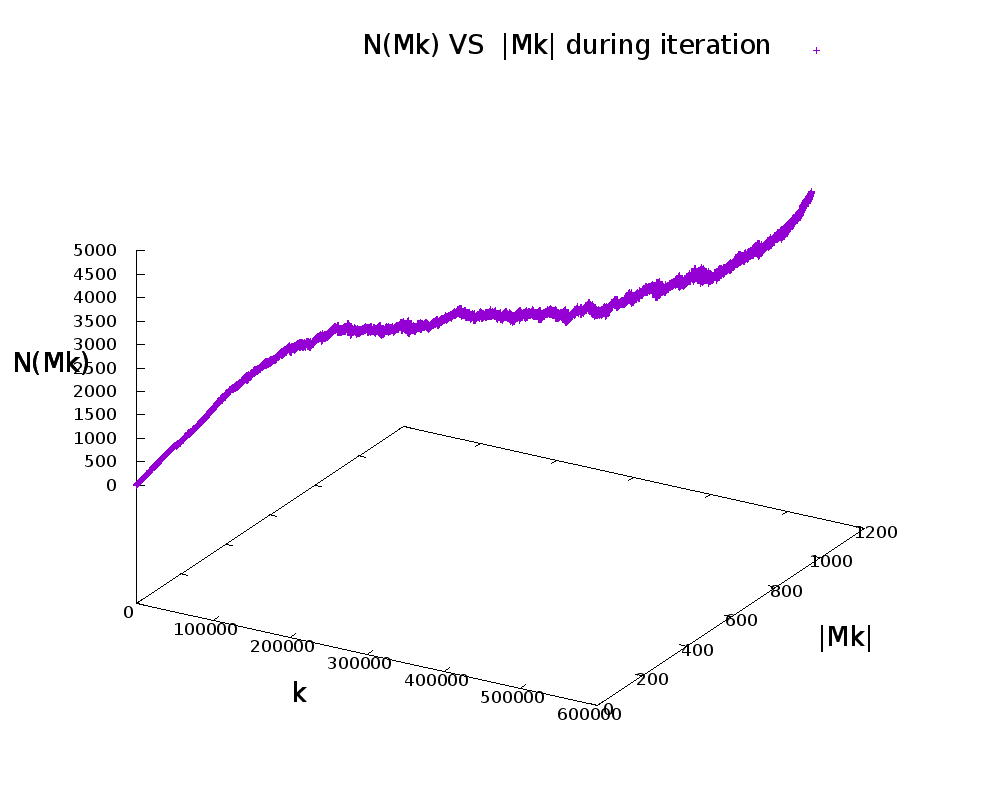}
  			\caption[Network2]%
  			{{\small $|\vv|=1000, |\ee|=5000, |\KK|=1000$}}    
  			\label{fig:mean1}
  		\end{subfigure}
  		\hfill
  		\begin{subfigure}[b]{0.475\textwidth}  
  			\centering 
  			\includegraphics[width=0.8\textwidth]{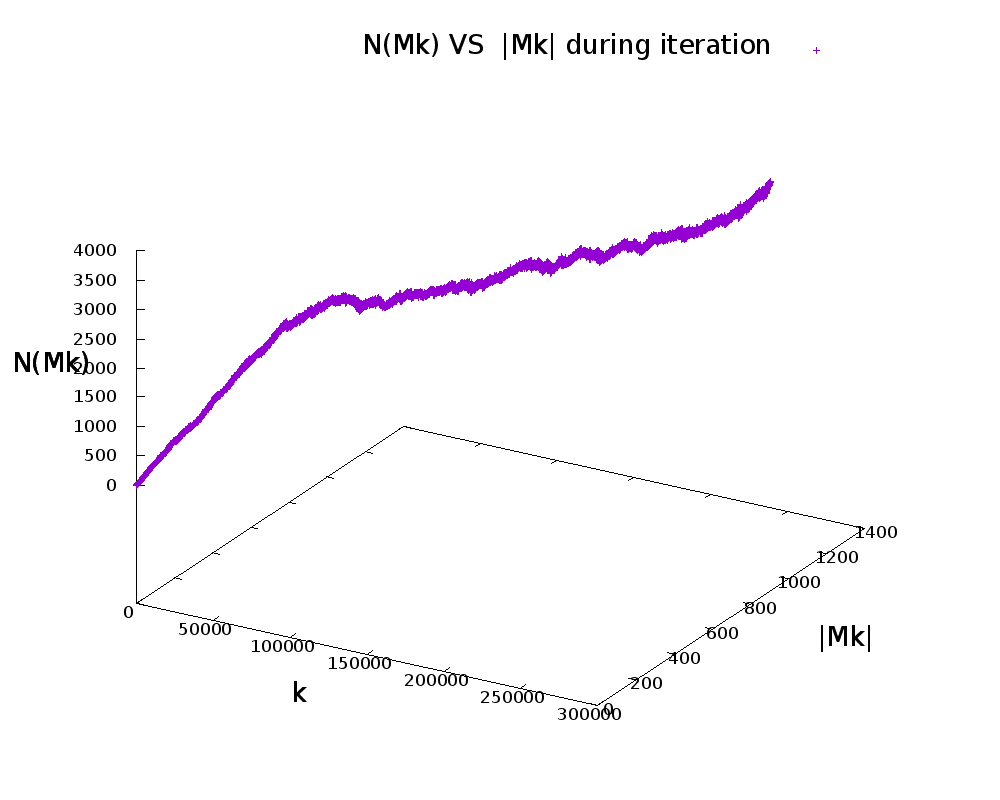}
  			\caption[]%
  			{{\small  $|\vv|=1500, |\ee|=7500, |\KK|=1000$}}    
  			\label{fig:mean3}
  		\end{subfigure}
  			\vskip\baselineskip
  			\begin{subfigure}[b]{0.475\textwidth}   
  				\centering 
  				\includegraphics[width=0.8\textwidth]{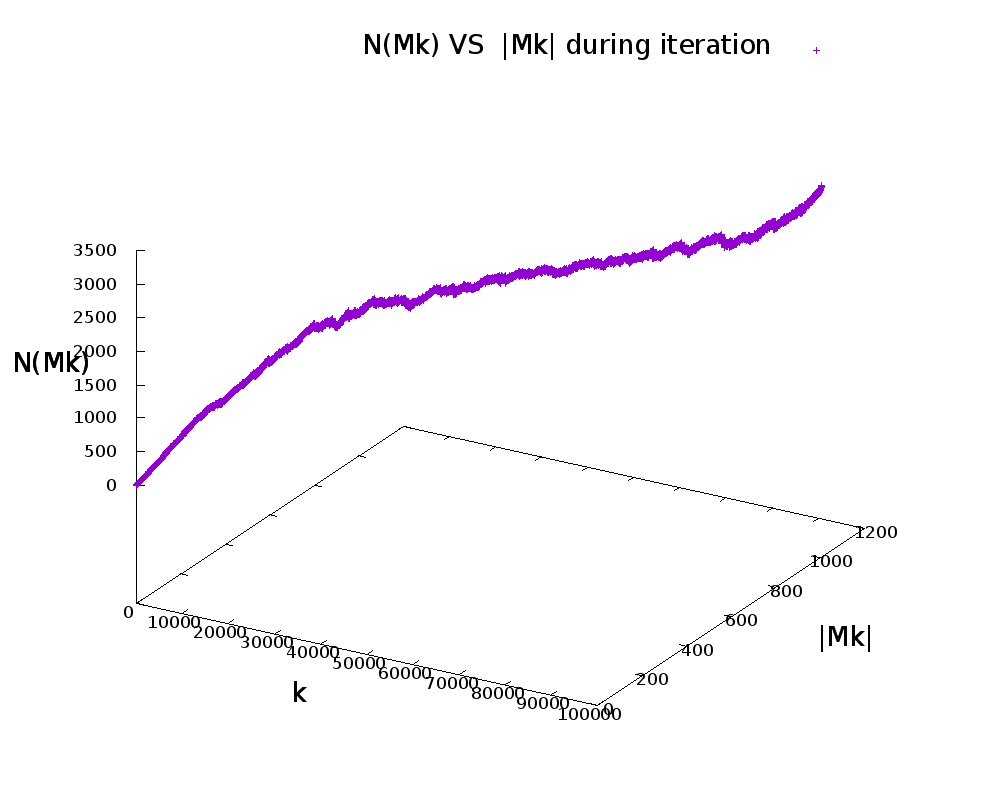}
  				\caption[]%
  				{{\small $|\vv|=2000, |\ee|=10000, |\KK|=1000$}}    
  				\label{fig:mean5}
  			\end{subfigure}
  			\quad
  			\begin{subfigure}[b]{0.475\textwidth}   
  				\centering 
  				\includegraphics[width=0.8\textwidth]{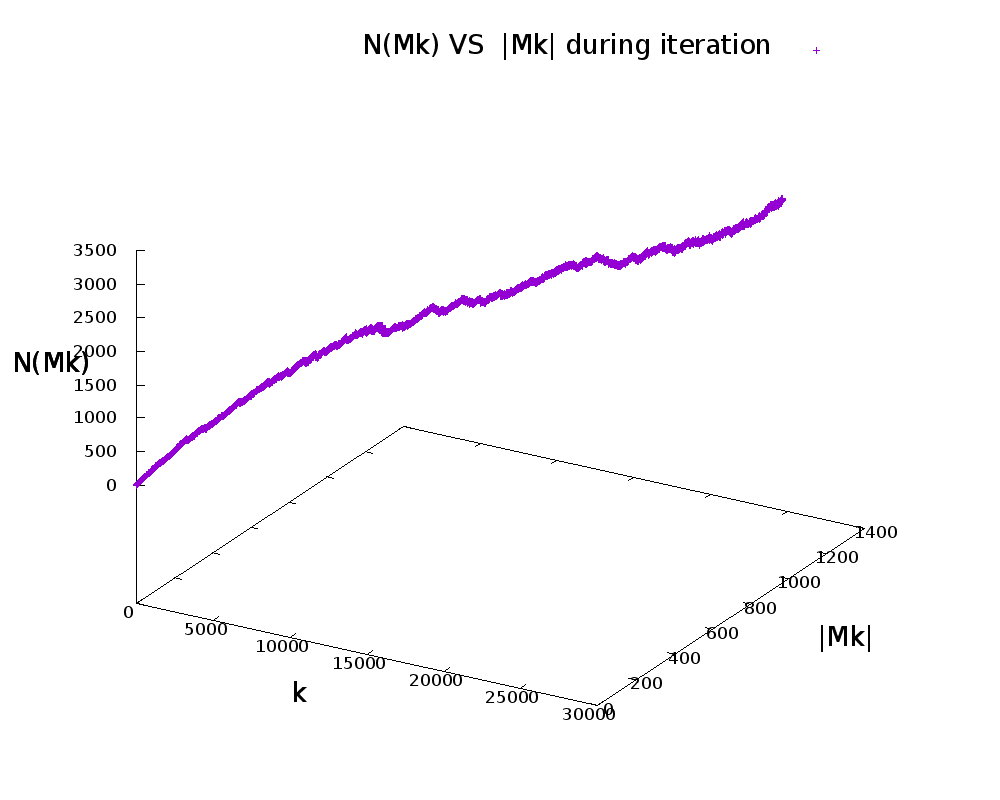}
  				\caption[]%
  				{{\small $|\vv|=3000, |\ee|=15000, |\KK|=1000$}}    
  				\label{fig:mean7}
  			\end{subfigure}
  		\vskip\baselineskip
  		\begin{subfigure}[b]{0.475\textwidth}   
  			\centering 
  			\includegraphics[width=0.8\textwidth]{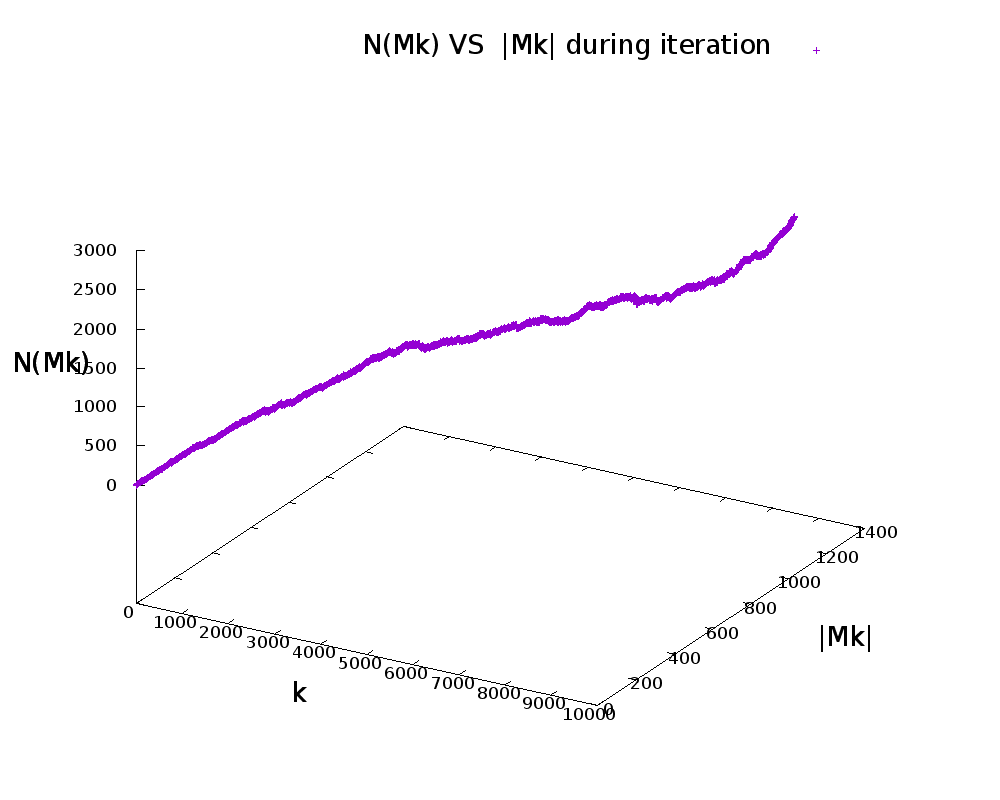}
  			\caption[]%
  			{{\small $|\vv|=5000, |\ee|=25000, |\KK|=1000$}}    
  			\label{fig:mean5}
  		\end{subfigure}
  		\quad
  		\begin{subfigure}[b]{0.475\textwidth}   
  			\centering 
  			\includegraphics[width=0.8\textwidth]{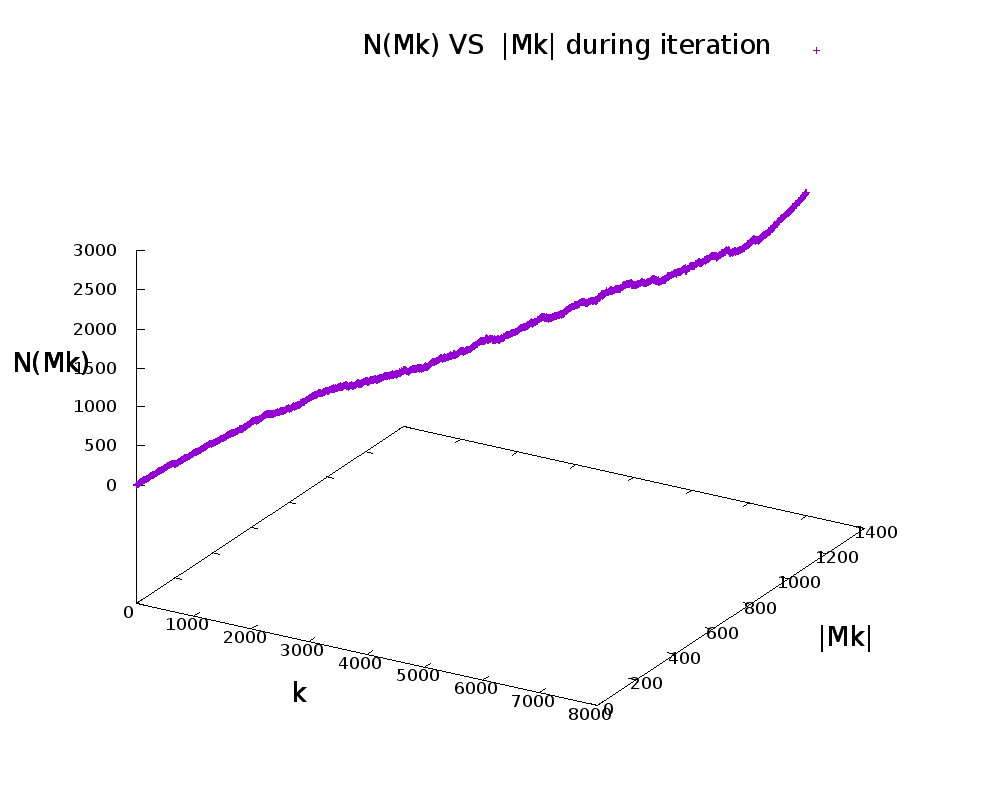}
  			\caption[]%
  			{{\small $|\vv|=7000, |\ee|=35000, |\KK|=1000$}}    
  			\label{fig:mean7}
  		\end{subfigure}
  		\caption[ The average and standard deviation of critical parameters ]
  		{\small The number of nonzero elements in matrix $\UU_k$ vs  its dimension during iteration} 
  		\label{fig:saturatelink}
  			\begin{flushleft}
  				{In above pictures, the average number of nonzero elements in  each row of matrix $\UU_k$ is less than $5$. Hence the matrices $\UU_k$ are very sparse.
  					 }
  			\end{flushleft}
  	\end{figure*}

According to \cite{vanderbei1998linear}'s suggestions, we use LU decomposition to solve equations (\ref{eq:simple}), (\ref{eq:simplexcoef}) and (\ref{dual:simple}). But because of the high sparsity of matrix $\UU_k$, \LAP\  \cite{anderson1990lapack}
kernels are not applicable.
Hence, we can use  the  linear solver \KLU   \ \cite{davis2010algorithm}, which has high performance  for  sparse matrix,   to solve  equations (\ref{eq:simple}), (\ref{eq:simplexcoef}) and (\ref{dual:simple}) instead of  \LAP .

In Table \ref{tab:compare} we find an interesting phenomenon  that when the value $\frac{N(\UU_k)}{|\UU_k|}$ is greater than or equal to   $3$ (case $R(1000)$), solving  linear equation will become dominating part of total 
time consumption. And when $\frac{N(\UU_k)}{|\UU_k|}$  is less than $3$, sparse linear solvers will greatly reduce the time of  linear equation solving.


\section{Speedup Through Sparse and Similar Properties}
\label{sec:advance}
After we  employ \KLU\ to solve  linear equations occurring in iteration of \SMCG, when the number of  saturated link is small,  then linear equation solving step is not a dominating part of time. But when   there are many saturated links the complexity  of \SMCG\  is almost the same as  classical one. When structure of 
matrix $\UU$ is complex (i.e.  nonzero elements of matrix $\UU_k$ is more than $3|\UU_k|$),  then linear equation solving step 
dominates the entire algorithm time,  even employing \KLU. 
Therefore, in the following,
we do  not invoke \KLU\ to solve equations 　but directly use  results in previous 
iteration to incrementally solve equations (\ref{exit:flow}), (\ref{exit:coef}) and (\ref{exit:dual}).

For keeping speedup, in this section we firstly give a fast method \locS\ which reduces {\bf Problem \ref{pro:1}} to a small one  in Section \ref{sec:sparse}. Secondly, we provide an incremental method \incS\ to  solve equations (\ref{exit:flow}), (\ref{exit:coef}) and (\ref{exit:dual}) in Section \ref{sec:incremental}. Thirdly,  in the final, we discuss why \locS\ and \incS\ are proper solvers for equation  during iteration in Section \ref{sec:qcg}.   

\subsection{A Fast Method to Solve Sparse Linear Equation System}
\label{sec:sparse}

\begin{problem}Solve linear equation system
	\[\AA\xx=\rhs\]
	where $\AA$ is a $n\times n$ matrix and $\rhs$ is a vector.
	\label{pro:1}
\end{problem}
We will provide a fast algorithm to solve {\bf Problem \ref{pro:1}}. This method can reduce linear equation system to a small one. Especially, when  $\AA$ is a very sparse matrix and $\rhs$ is also a sparse vector, this method is very powerful. For presenting this  fast method we first  give following definitions and lemmas.
\begin{mydef}
	For $n\times n$ matrix $\AA$, let $G(\AA)$ be the undirected graph of matrix $\AA$ with $2n$ nodes. $G(\AA)$ has a link $(i,n+j)$ iff $\AA[i,j]\neq 0$.    Let $\reach_{G(\AA)}(\HB)$
	be the set of nodes  reachable from   element of $\HB$ through $G(\AA)$.
\end{mydef}

\begin{lemma}
In {\bf Problem \ref{pro:1}}, let $\HB=\{i\mid \rhs[i]\neq 0 \}, \XI=\{i\mid i\in  \reach_{G(\AA)} (\HB), i< n\}$. If $\HB\not \subseteq \XI$, then {\bf Problem \ref{pro:1}} has no solution. \label{lem:sol}
\end{lemma}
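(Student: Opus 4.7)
The plan is to prove the contrapositive: assuming some $\xx$ satisfies $\AA\xx=\rhs$, I would show $\HB\subseteq\XI$. Pick an arbitrary $i^{*}\in\HB$; the goal is to verify $i^{*}\in \reach_{G(\AA)}(\HB)$ and then conclude $i^{*}\in\XI$ using the index restriction $i<n$ that selects the row-side nodes of the bipartite graph.

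The key inspection is on the $i^{*}$-th scalar equation of $\AA\xx=\rhs$, namely $\sum_{j}\AA[i^{*},j]\,\xx[j]=\rhs[i^{*}]$. Because $i^{*}\in\HB$ the right-hand side is nonzero, so the left-hand sum cannot vanish; hence there must exist at least one column index $j^{*}$ with $\AA[i^{*},j^{*}]\neq 0$. By the defining property of $G(\AA)$, this nonzero coefficient produces the edge $(i^{*},\,n+j^{*})$. From this edge I obtain the length-two walk $i^{*}\to n+j^{*}\to i^{*}$, which starts at an element of $\HB$ and ends at $i^{*}$, certifying $i^{*}\in \reach_{G(\AA)}(\HB)$. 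Combined with $i^{*}<n$ this yields $i^{*}\in \XI$, and since $i^{*}$ was arbitrary, $\HB\subseteq\XI$, contradicting the hypothesis $\HB\not\subseteq\XI$.

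The main subtlety — and the only place where the argument could misfire — is the convention for $\reach$: for the hypothesis $\HB\not\subseteq\XI$ to ever be satisfiable, reachability here must mean existence of a walk of positive length, rather than the reflexive closure (otherwise $\HB\subseteq \reach_{G(\AA)}(\HB)$ holds vacuously). Under that natural reading, there is no real technical obstacle; the proof is essentially a reformulation, in the language of the bipartite sparsity graph $G(\AA)$, of the elementary fact that a row of $\AA$ all of whose entries are zero cannot be paired with a nonzero right-hand-side entry in any consistent linear system.
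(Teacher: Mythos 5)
Your proof is correct and is essentially the contrapositive of the paper's own argument: the paper fixes $h\in \HB\setminus \XI$ and asserts that row $h$ must then be identically zero (so the $h$-th equation cannot hold), while you make explicit the step it leaves implicit, namely that a nonzero entry $\AA[i^{*},j^{*}]$ yields the edge $(i^{*},n+j^{*})$ and hence the length-two walk putting $i^{*}$ in $\reach_{G(\AA)}(\HB)$. Your closing remark about the non-reflexive reading of $\reach$ is a fair observation (under the reflexive convention the hypothesis is unsatisfiable), but it does not change the substance; the two proofs are the same.
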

\begin{proof}
Set $h\in \HB, h\not \in  \XI$.  Thus, all the elements in row $h$ are  zeros. Therefore, we have $\AA[h,1]\aa[1]+\cdots+\AA[h,n]\aa[n]=0\aa[1]+\cdots+0\aa[n]=0$ for every $\aa\in \RR^n$. 
But $\rhs[h]\neq 0$, so there is no $\aa\in \RR^n$ satisfying {\bf Problem \ref{pro:1}}.
\qed
\end{proof}

\begin{mydef}
	$\XI,\XJ$ are two sub-sequences of $1,2,\cdots,n$.  $\AIJ$ is called  $(\XI,\XJ)$-projection of $\AA$ (briefly projection of $\AA$) if $\AIJ[i,j]=\AA[\XI[i],\XJ[j]],$ for $i=1,\cdots,|\XI|,j=1,\cdots,|\XJ|$
	\label{def:projection}	
\end{mydef}

\begin{mydef}
	In {\bf Problem \ref{pro:1}}, let  $\HB=\{i\mid \rhs[i]\neq 0 \}$.    $\AIJ$ is called a  computable projection of {\bf Problem \ref{pro:1}} if 
	\begin{enumerate}[(i)]
		\item $\HB \subseteq  \XI$;
		\item $\{i\mid \AA[i,j]\neq 0, j\in \XJ \} \subseteq \XI$.
	 	\item $\{j\mid \AA[i,j]\neq 0, i\in \XI \} \subseteq \XJ$.
	\end{enumerate}   
	\label{def:comput}
\end{mydef}

\begin{mydef}
	$\XI$ is a sub-sequence of $1,2,\cdots,n $. $\rhs_{\XI}$ is called an $\XI$-projection of $\rhs$ if $\rhs_{\XI}[i]=\rhs[\XI[i]]$ for $i=1,\cdots, |\XI|$
	
\end{mydef}

\begin{mydef}
	$\XI$ is a sub-sequence of $1,\cdots, n$ and $\aa$ is a vector such that $|\aa|=|\XI|$. $\lift(\aa, \XI, n)$ denotes a lifting vector where
	\[\lift(\aa, \XI, n)[i]=\begin{cases}
	\aa[j]& \quad \text{ if } \XI[j]=i \\
	0& \quad  \text{ otherwise} 
	\end{cases} \quad \mbox{ for } i=1,\cdots,n\]
\end{mydef}

\begin{lemma}
Let	$\AIJ$ be a computable projection of {\bf Problem \ref{pro:1}}. If there exists a vector $\aa$ satisfying that $\AIJ\aa=\rhs_{\XI}$, then $\lift(\aa, \XI, n) $ is a solution of {\bf Problem \ref{pro:1}}.  \label{lem:lift}
\end{lemma}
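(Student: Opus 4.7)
The plan is to verify directly that $\xx = \lift(\aa, \XJ, n)$ satisfies $\AA\xx = \rhs$ coordinate by coordinate, using the three defining conditions of a computable projection. A quick dimension check on $\AIJ\aa = \rhs_{\XI}$ forces $|\aa| = |\XJ|$, so the lifting must be taken with respect to $\XJ$ (the statement as written uses $\XI$, but the definition of $\lift$ requires the index set to match $|\aa|$; I would adopt this natural reading).

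First I would unpack $\lift$: for each column index $j$ we have $\xx[j] = \aa[m]$ if $j = \XJ[m]$, and $\xx[j] = 0$ otherwise. Consequently, for any row index $i$,
\begin{equation*}
(\AA\xx)[i] \;=\; \sum_{j=1}^{n} \AA[i,j]\,\xx[j] \;=\; \sum_{m=1}^{|\XJ|} \AA[i,\XJ[m]]\,\aa[m].
\end{equation*}
Now split on whether $i \in \XI$. If $i = \XI[k]$ for some $k$, then $\AA[\XI[k], \XJ[m]] = \AIJ[k,m]$ by Definition \ref{def:projection}, so the sum collapses to $(\AIJ\aa)[k] = \rhs_{\XI}[k] = \rhs[\XI[k]] = \rhs[i]$. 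If instead $i \notin \XI$, condition (ii) of Definition \ref{def:comput} forces $\AA[i,\XJ[m]] = 0$ for every $m$ (contrapositive of ``non-zero in row $i$, column in $\XJ$ implies $i \in \XI$''), so $(\AA\xx)[i] = 0$; and condition (i) gives $\HB \subseteq \XI$, whence $i \notin \HB$ and $\rhs[i] = 0$. Both sides vanish, and the equality holds.

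There is no real obstacle here — the argument is pure bookkeeping. It is worth noting, however, that condition (iii) (closure of the column-support of rows in $\XI$ inside $\XJ$) is not used in this direction at all; it is only needed to guarantee that solvability of the small system is equivalent to solvability of the original one (i.e.\ that nothing outside $\XJ$ could have contributed to the rows of $\XI$). The only technical wrinkle to be careful about is the $\XI$ versus $\XJ$ indexing in the lift, which I would flag explicitly in the final write-up.
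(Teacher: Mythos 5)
Your proof is correct and follows essentially the same route as the paper's: split the rows into those indexed by $\XI$, where the sum collapses to $(\AIJ\aa)[k]=\rhs[i]$, and the remaining rows, where condition (ii) of the computable-projection definition forces the left side to vanish and condition (i) forces $\rhs[i]=0$. Your observation that the lift must be taken over $\XJ$ rather than $\XI$ is also correct --- the paper's own proof implicitly uses $|\XJ|=m_1$ as the support of the lifted vector, so you have caught a genuine indexing slip in the statement rather than a gap in your own argument.
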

\begin{proof}
	Let $\ksi=\lift(\aa, \XI, n)$. In the following we want to prove that $\AA\ksi=\rhs$. W.l.o.g. set $\XI=\{1,2,\cdots,n_1\},\ \XJ=\{1,2,\cdots,m_1\}$.
	
	First, we will prove that $\AA[i]\ksi=\rhs[i]$ for $i=1,\cdots,n_1$. By definition of $\ksi$, $\ksi[i]=0$ for $i>m_1$. Thus, \[\AA[i,1]\ksi[1]+\cdots\AA[i,n]\ksi[n]=\AA[i,1]\ksi[1]+\cdots+\AA[i,m_1]\ksi[m_1]=\rhs[i]\] for $i=1,\cdots,n_1$. 
	
	Second, we will prove that $\AA[i]\ksi=\rhs[i]=0$ for $i=n_1+1,\cdots,n$. Since $\AIJ$ is a computable projection, $\AA[i,j]=0$ for $i> n_1, j\le m_1$. Thus, \[\AA[i,1]\ksi[1]+\cdots\AA[i,n]\ksi[n]=\AA[i,m_1+1]\ksi[m_1+1]+\cdots+\AA[i,n]\ksi[n]\] for $i=n_1+1,\cdots,n$. By the definition of $\lift$, $\ksi[i]=0$ for $i>m_1$. Therefore, 
	\[\AA[i,m_1+1]\ksi[m_1+1]+\cdots+\AA[i,n]\ksi[n]=\rhs[i]=0\] for $i=n_1+1,\cdots,n$.
	
	In summary, $\AA[i]\ksi=\rhs[i]$ for $i=1,\cdots,n$. So  $\lift(\aa, \XI, n) $ is a solution to {\bf Problem \ref{pro:1}}.
	\qed
\end{proof}

\begin{theorem}
	If $\AA_{\XI,\XJ}$ is a computable projection of {\bf Problem 
	\ref{pro:1}}, then the
	system has solution $\ksi$ iff   there exists a vector $\aa$ such that
	$\AIJ\aa=\rhs_{\XI} $.       Furthermore, $\lift(\aa, \XI, n) $ is a solution of {\bf Problem \ref{pro:1}}. 
	\label{the:main}
\end{theorem}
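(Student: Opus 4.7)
The plan is to decompose the biconditional into its two directions, observe that the backward direction together with the ``furthermore'' clause is precisely the content of Lemma \ref{lem:lift}, and then focus the work on the forward direction. Concretely, I would structure the proof as: ($\Leftarrow$) quote Lemma \ref{lem:lift} directly to get that if $\aa$ satisfies $\AIJ\aa = \rhs_{\XI}$, then $\lift(\aa,\XI,n)$ solves $\AA\xx=\rhs$; in particular such an $\aa$ guarantees that Problem \ref{pro:1} has a solution. This handles both the ``if'' half and the final sentence of the theorem at once.

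For the forward direction ($\Rightarrow$), I would start from an arbitrary solution $\ksi$ of $\AA\xx=\rhs$ and construct the candidate $\aa$ explicitly by restricting $\ksi$ to the column indices retained in $\XJ$, i.e.\ set $\aa[j] = \ksi[\XJ[j]]$ for $j=1,\dots,|\XJ|$. The verification then amounts to showing, for each $i=1,\dots,|\XI|$,
\begin{equation*}
(\AIJ\aa)[i] \;=\; \sum_{j=1}^{|\XJ|} \AA[\XI[i],\XJ[j]]\,\ksi[\XJ[j]] \;=\; \sum_{k=1}^{n} \AA[\XI[i],k]\,\ksi[k] \;=\; (\AA\ksi)[\XI[i]] \;=\; \rhs[\XI[i]] \;=\; \rhs_{\XI}[i].
\end{equation*}
The outer equalities are immediate from the definitions of $\AIJ$-projection, $\rhs_{\XI}$-projection, and the hypothesis $\AA\ksi=\rhs$. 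The middle equality is the one that does real work.

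The main obstacle, and the only place the hypothesis of computable projection is actually used in this direction, is justifying that the sum over $k=1,\dots,n$ collapses to the sum over $k\in\XJ$. This is exactly condition (iii) of Definition \ref{def:comput}: for every $i\in\XI$, any column index $k$ with $\AA[i,k]\neq 0$ must lie in $\XJ$. Hence for $k\notin\XJ$ the term $\AA[\XI[i],k]\,\ksi[k]$ vanishes because its coefficient is zero, which yields the needed identity. I would note in passing that conditions (i) and (ii) of Definition \ref{def:comput} are not needed for this direction but are what make the lifting in Lemma \ref{lem:lift} go through for the converse, so together the three conditions are exactly what is required for the equivalence.

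Combining the two parts, we obtain the biconditional, and the ``furthermore'' clause follows from the already-invoked Lemma \ref{lem:lift} applied to the $\aa$ produced in the ($\Leftarrow$) step (or, in the forward direction, one may simply observe that $\lift(\aa,\XI,n)$ need not equal $\ksi$ but is guaranteed by Lemma \ref{lem:lift} to be \emph{some} solution of Problem \ref{pro:1}). No extra machinery beyond Definition \ref{def:comput} and Lemma \ref{lem:lift} is required, so the proof should be short and essentially bookkeeping once the candidate $\aa[j]=\ksi[\XJ[j]]$ is written down.
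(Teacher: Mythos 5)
Your proposal is correct and follows essentially the same route as the paper's own proof: the backward direction and the ``furthermore'' clause are obtained by citing Lemma \ref{lem:lift}, and the forward direction restricts a solution $\ksi$ to the indices in $\XJ$ and uses condition (iii) of Definition \ref{def:comput} to collapse the full sum $\sum_{k=1}^{n}\AA[\XI[i],k]\ksi[k]$ to the sum over $k\in\XJ$. Your added remark that the lifted vector need only be \emph{some} solution, not necessarily $\ksi$ itself, is a small but welcome clarification over the paper's wording.
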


\begin{proof}
	W.l.o.g. we set $\XI=\{1,2,\cdots, n_1\},\ \XJ=\{1,2,\cdots, m_1\} $.
	Let us assume that  $\ksi$ is a solution of {\bf Problem \ref{pro:1}}.   By condition (iii) of {\bf Definition \ref{def:comput}},  we have $A[i,j]=0$ when $i\le n_1$ and $j>m_1$. Thus,
	\[\AA[i,1]\ksi[1]+\cdots+\AA[i,n]\ksi[n]=\AA[i,1]\ksi[1]+\cdots+\AA[i,m_1]\ksi[m_1] =\rhs[i]\]
	for $1,\cdots, n_1$.
	In other words, $\AIJ\ksi_{\XJ}=\rhs_{\XI}$.
	
	 Since   $\AIJ$ is one of computable projections of {\bf Problem \ref{pro:1}}, then applying {\bf Lemma \ref{lem:lift}}, $\lift(\aa,\XJ,n  )$ is one solution of {\bf Problem \ref{pro:1}}.\qed
\end{proof}

\begin{lemma}
	In {\bf Problem \ref{pro:1}},	let $\HB=\{i\mid \rhs[i]\neq 0 \},\ \XI= \{i\mid i\in \reach_{G(\AA)}(\HB),\ i<n\},\  \XJ= \{j-n\mid j\in \reach_{G(\AA)}(\HB),\ j\ge n\}$. Then   $\AIJ$ is a computable projection of {\bf Problem \ref{pro:1}},  if it  has a solution.\label{lem:comp}
	\label{lem:add}
\end{lemma}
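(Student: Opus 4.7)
The plan is to verify each of the three defining conditions of a computable projection (Definition~\ref{def:comput}) directly from the construction of $\XI$ and $\XJ$ via the bipartite reachability graph $G(\AA)$. The hypothesis that Problem~\ref{pro:1} has a solution plays essentially no role beyond ensuring consistency with Lemma~\ref{lem:sol}; in fact, all three inclusions should follow just from unpacking what ``reachable in $G(\AA)$'' means.

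First, for condition (i), I would observe that every $h\in\HB$ lies in $\reach_{G(\AA)}(\HB)$ trivially (by the zero-length path starting at $h$). Since $\HB$ consists of row indices (so $h\le n$), this yields $\HB\subseteq\XI$ immediately. This is also consistent with Lemma~\ref{lem:sol}, whose contrapositive would otherwise refute the solvability hypothesis.

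Next, for condition (ii), suppose $\AA[i,j]\neq 0$ for some $j\in\XJ$. By the definition of $\XJ$, the column-node $n+j$ lies in $\reach_{G(\AA)}(\HB)$. The nonzero entry $\AA[i,j]$ produces an edge $(i,n+j)$ in the undirected graph $G(\AA)$, so we may extend a path from $\HB$ to $n+j$ by one step to reach the row-node $i$. Since $i\le n$, this places $i$ in $\XI$. Condition (iii) is proved symmetrically: if $\AA[i,j]\neq 0$ with $i\in\XI$, then $i\in\reach_{G(\AA)}(\HB)$ and the edge $(i,n+j)\in G(\AA)$ witnesses that $n+j\in\reach_{G(\AA)}(\HB)$, whence $j\in\XJ$.

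There is no real obstacle here; the lemma is essentially a definition-chase once one has the bipartite encoding of the sparsity pattern in hand. The only mild subtlety is fixing the convention that the reachable set contains the seed nodes themselves (paths of length zero), which is what makes (i) automatic and removes any need to invoke the solvability hypothesis in the body of the argument.
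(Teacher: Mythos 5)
Your proof is correct in substance and, for conditions (ii) and (iii) of Definition~\ref{def:comput}, it is the same definition-chase on the bipartite graph $G(\AA)$ that the paper intends (the paper compresses it to the one-line claim that $\XI=\{i\mid \AA[i,j]\neq 0,\ j\in\XJ\}$ and $\XJ=\{j\mid \AA[i,j]\neq 0,\ i\in\XI\}$; your two-step path-extension argument is the honest version of that). The genuine divergence is in how condition (i) is obtained. The paper derives $\HB\subseteq\XI$ from the solvability hypothesis via the contrapositive of Lemma~\ref{lem:sol}; you instead declare it automatic by adopting the convention that $\reach_{G(\AA)}(\HB)$ contains the seed nodes themselves (zero-length paths). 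Be aware that this convention is incompatible with how the paper uses $\reach$ elsewhere: the proof of Lemma~\ref{lem:sol} concludes from $h\in\HB$, $h\notin\XI$ that row $h$ of $\AA$ is identically zero, and Algorithm~\locS\ tests $\HB\not\subseteq\XI$ as a genuine infeasibility check --- both of which would be vacuous if seeds were always self-reachable. So the paper's implicit convention is that an isolated seed node is \emph{not} in its own reachable set, and under that convention the solvability hypothesis is genuinely needed for (i), exactly as the paper invokes it. Your argument still yields the stated lemma (the hypothesis is available either way), but the paper's route is the one consistent with the rest of Section~\ref{sec:sparse}; if you keep your convention you should say explicitly that it trivializes Lemma~\ref{lem:sol} and the feasibility test in \locS.
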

\begin{proof}
	Since {\bf Problem \ref{pro:1}} has a solution, we have $\HB\subseteq \XI$ because  otherwise it will conflict with {\bf Lemma \ref{lem:sol}}.  By definition of $\XI$ and $\XJ$, $\XI=\{i\mid \AA[i,j]\neq 0, j\in  \XJ\}$ and $\XJ=\{j\mid \AA[i,j]\neq 0, i\in  \XI\}$.
	In summary,  $\XI,\XJ$ satisfy condition (i) (ii) (iii) of {\bf Definition \ref{def:comput}}.
	 So, 
$\AIJ$ is a computable projection of  {\bf Problem \ref{pro:1}}.\qed
\end{proof}
\begin{corollary}
	In {\bf Problem \ref{pro:1}},	let $\HB=\{i\mid \rhs[i]\neq 0 \},\ \XI= \{i\mid i\in \reach_{G(\AA)}(\HB),\ i < n\},\ \XJ= \{j-n\mid j\in \reach_{G(\AA)}(\HB),\  j\ge n\}$. Then   $\AIJ\yy=\rhs_{\XI}$ has  solution and $\lift(\yy, \XI, n)$ is a solution to	 {\bf Problem \ref{pro:1}},  if 
	{\bf Problem \ref{pro:1}} is feasible. \label{cor:1}   
\end{corollary}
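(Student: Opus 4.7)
The plan is to deduce this corollary directly by composing the two results immediately preceding it, namely \textbf{Lemma \ref{lem:comp}} and \textbf{Theorem \ref{the:main}}. Since the corollary only restates the existence half of the theorem under the hypothesis that \textbf{Problem \ref{pro:1}} is feasible, no new argument beyond chaining these two facts should be needed.

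First I would verify the hypotheses of \textbf{Lemma \ref{lem:comp}}. The sets $B$, $I$, $J$ in the corollary are defined exactly as in that lemma, and the feasibility assumption on \textbf{Problem \ref{pro:1}} is precisely what the lemma requires. Invoking \textbf{Lemma \ref{lem:comp}} therefore yields that $\AIJ$ is a computable projection of \textbf{Problem \ref{pro:1}} in the sense of \textbf{Definition \ref{def:comput}}.

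Next I would apply \textbf{Theorem \ref{the:main}}. The theorem states that, when $\AIJ$ is a computable projection, the original system has a solution $\ksi$ if and only if there is a vector $\aa$ with $\AIJ \aa = \rhs_I$, and in that case $\lift(\aa, I, n)$ is a solution. Since we are assuming feasibility, the forward direction of this equivalence produces a vector $\yy$ with $\AIJ \yy = \rhs_I$ (establishing that the reduced system has a solution), and the constructive part gives that $\lift(\yy, I, n)$ is a solution to \textbf{Problem \ref{pro:1}}. This closes the argument.

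I do not expect any real obstacle, since the corollary is essentially a packaging of the two earlier results; the only bookkeeping to be careful about is that the index sets $I$ and $J$ match those of \textbf{Lemma \ref{lem:comp}} (in particular, that the shift $j - n$ for the column indices is consistent with the bipartite convention used in the definition of $G(\AA)$), and that the solvability hypothesis of \textbf{Theorem \ref{the:main}} is invoked in the correct direction.
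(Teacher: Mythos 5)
Your proposal is correct and follows essentially the same route as the paper's own proof: the paper likewise invokes \textbf{Lemma \ref{lem:comp}} to conclude that $\AIJ$ is a computable projection under the feasibility hypothesis, and then applies \textbf{Theorem \ref{the:main}} to obtain both the solvability of the reduced system and the lifted solution. The bookkeeping points you flag (the $j-n$ index shift and the direction of the equivalence) are handled the same way in the paper, so nothing further is needed.
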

\begin{proof}
When there is an  $\xx$ satisfying {\bf Problem \ref{pro:1}}, 
employing {\bf Lemma \ref{lem:comp}},  $\AIJ$ is a computable projection of $\AA$. By {\bf Theorem \ref{the:main}}, the conclusion holds.\qed	
\end{proof}

\begin{theorem}
	In {\bf Problem \ref{pro:1}},	let $\HB=\{i\mid \rhs[i]\neq 0 \},\ \XI= \{i\mid i\in \reach_{G(\AA)}(\HB), i< n\},\  \XJ= \{j-n\mid j\in \reach_{G(\AA)}(\HB),\  j\ge n\}$. If $\AA$ is a non-singular matrix then there is a unique vector $\yy$ such that  $\AIJ\yy=\rhs_{\XI}$.  \label{the:2}
\end{theorem}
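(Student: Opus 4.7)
\medskip

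\noindent\textbf{Proof plan.} The plan is to exhibit a bipartite-like block decomposition of $\AA$ induced by the reachability sets $\XI$ and $\XJ$, and then to deduce both that $|\XI|=|\XJ|$ (so $\AIJ$ is square) and that $\AIJ$ is non-singular. Uniqueness of $\yy$ will then be immediate from linear algebra, and existence is already delivered by Corollary 1, since non-singularity of $\AA$ makes $\AA\xx=\rhs$ feasible.

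First I would establish two zero-pattern claims coming from the structure of $G(\AA)$. Recall that $G(\AA)$ is bipartite with nodes $\{1,\dots,n\}$ (``rows'') and $\{n{+}1,\dots,2n\}$ (``columns''), and the edge $(i,n+j)$ exists iff $\AA[i,j]\neq 0$. Claim (a): for every $i\notin \XI$ and every $j\in \XJ$, we have $\AA[i,j]=0$; otherwise the node $n+j\in \reach_{G(\AA)}(\HB)$ would have $i$ as a neighbor, forcing $i\in \XI$, a contradiction. Claim (b): symmetrically, for every $i\in \XI$ and every $j\notin \XJ$, $\AA[i,j]=0$. Consequently, after permuting rows so that those indexed by $\XI$ come first and permuting columns so that those indexed by $\XJ$ come first, $\AA$ takes the block-diagonal form
\begin{equation*}
\widetilde{\AA}=\begin{pmatrix} \AIJ & \zero \\ \zero & \AA_{\bar\XI,\bar\XJ}\end{pmatrix},
\end{equation*}
where $\bar\XI$ and $\bar\XJ$ are the complementary index sets.

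Next I would use non-singularity of $\AA$ to force $\AIJ$ to be a square non-singular matrix. Since permutations preserve rank, $\widetilde\AA$ also has rank $n$. For a block-diagonal matrix $\mathrm{rank}(\widetilde\AA)=\mathrm{rank}(\AIJ)+\mathrm{rank}(\AA_{\bar\XI,\bar\XJ})$, and each summand is bounded by the minimum of its row- and column-dimensions. Combining $\mathrm{rank}(\AIJ)\le \min(|\XI|,|\XJ|)$ and $\mathrm{rank}(\AA_{\bar\XI,\bar\XJ})\le \min(|\bar\XI|,|\bar\XJ|)$ with $|\XI|+|\bar\XI|=|\XJ|+|\bar\XJ|=n$ and the requirement that the sum equal $n$, both inequalities must be equalities and in particular $|\XI|=|\XJ|$ with $\AIJ$ of full rank. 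Hence $\AIJ$ is a square invertible matrix.

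Finally I would assemble the statement. Non-singularity of $\AA$ guarantees the existence of a (unique) solution to $\AA\xx=\rhs$, so by Corollary 1 the system $\AIJ\yy=\rhs_{\XI}$ has at least one solution, and by invertibility of $\AIJ$ that solution is unique. The main obstacle I anticipate is making the zero-pattern argument (a)--(b) above airtight despite the asymmetric treatment of rows and columns in the definitions of $\XI$ and $\XJ$; once the bipartite reachability picture is spelled out, everything else reduces to elementary rank bookkeeping on the block decomposition.
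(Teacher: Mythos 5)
Your proof is correct, but it takes a genuinely different route from the paper's. The paper proves existence exactly as you do (non-singularity of $\AA$ makes the system feasible, then Corollary~\ref{cor:1} applies), but for uniqueness it never touches the structure of $\AIJ$: it supposes two distinct solutions $\yy\neq\yy'$ of $\AIJ\yy=\rhs_{\XI}$, lifts both via Lemma~\ref{lem:lift} to two distinct solutions of $\AA\xx=\rhs$, and contradicts non-singularity of $\AA$. In particular, the paper does not know at this point that $\AIJ$ is square or invertible --- those facts are only established afterwards, in Corollary~\ref{cor:sym}, \emph{using} Theorem~\ref{the:2} together with a detour through the symmetric matrix $(\AIJ)^T\AIJ$ and the transposed system $\AA^T\xx=b'$. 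Your argument instead exploits that the reachability sets $\XI$ and $\XJ$ are closed under taking neighbours in the bipartite graph $G(\AA)$, which yields the two zero-pattern claims and hence the permuted block-diagonal form $\widetilde{\AA}=\mathrm{diag}(\AIJ,\AA_{\bar\XI,\bar\XJ})$; the rank count then forces $|\XI|=|\XJ|$ and invertibility of $\AIJ$, from which uniqueness is immediate. What your approach buys is that it proves Corollary~\ref{cor:sym} en route, replacing the paper's somewhat roundabout argument with elementary rank bookkeeping, and it makes the reason for uniqueness structurally transparent; the paper's lifting argument is shorter if one only wants the uniqueness statement itself. Both arguments rely on the same prerequisites (Lemma~\ref{lem:comp}/Corollary~\ref{cor:1} for existence), so there is no gap in either.
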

\begin{proof}
Since $\AA$ is a non-singular matrix, {\bf Problem \ref{pro:1}} has a unique solution $\xx$. Employing Corollary \ref{cor:1},  there is a vector $\yy$ such that $\AIJ\yy=\rhs_{\XI}$. Suppose that there is another $\yy'\neq \yy$ such that 
 $\AIJ\yy'=\rhs_{\XI}$. By {\bf Lemma \ref{lem:lift}}, $\xx=\lift(\yy, \XJ, n ), \xx'=\lift(\yy', \XJ, n ) $ are two solutions of {\bf Problem \ref{pro:1}}. In other words, $\AA(\xx-\xx')=0$.   It is easy to check that $\xx\neq \xx' $. Hence, $\AA$ is a singular matrix, which conflicts with the fact that $\AA$ is a non-singular matrix. So, $\AIJ\yy=\rhs_{\XI}$ has  a unique solution. 
\qed	
\end{proof}

\begin{corollary}
	In {\bf Problem \ref{pro:1}},	let $\HB=\{i\mid \rhs[i]\neq 0 \},\ \XI= \{i\mid i\in \reach_{G(\AA)}(\HB),\ i< n\},\  \XJ= \{j-n\mid j\in \reach_{G(\AA)}(\HB),\ j\ge n\}$. If $\AA$ is a non-singular matrix then $|\XI|=|\XJ|$ and $\AIJ$ is a non-singular matrix.
	\label{cor:sym}
\end{corollary}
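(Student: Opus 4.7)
The plan is to extract a block-diagonal structure of $\AA$ from the connected components of the bipartite graph $G(\AA)$ and then read off both claims by a standard rank count. First I would note the following closure property: whenever $\AA[i,j]\neq 0$, the edge $(i,n+j)$ lies in $G(\AA)$, so the row vertex $i$ and the column vertex $n+j$ belong to the same connected component of $G(\AA)$. Enumerate the connected components of $G(\AA)$ as $(R_1,\,n+C_1),\dots,(R_m,\,n+C_m)$, with $R_k\subseteq\{1,\dots,n\}$ its row vertices and $C_k\subseteq\{1,\dots,n\}$ the (shifted) column indices in the $k$-th component. A simultaneous permutation of rows in the order $R_1,\dots,R_m$ and of columns in the order $C_1,\dots,C_m$ brings $\AA$ into block-diagonal form with diagonal blocks $\AA[R_1,C_1],\dots,\AA[R_m,C_m]$, since any entry $\AA[i,j]$ with $i\in R_k$ and $j\in C_l$ for $k\neq l$ is forced to be zero.

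Next I would use the non-singularity of $\AA$. The rank of a block-diagonal matrix equals the sum of the ranks of its blocks, and $\mathrm{rank}(\AA[R_k,C_k])\le\min(|R_k|,|C_k|)$. Because $\{R_k\}_k$ and $\{C_k\}_k$ each partition $\{1,\dots,n\}$, one has
\[ n = \mathrm{rank}(\AA) = \sum_{k=1}^{m}\mathrm{rank}(\AA[R_k,C_k]) \le \sum_{k=1}^{m}\min(|R_k|,|C_k|) \le \tfrac{1}{2}\sum_{k=1}^{m}(|R_k|+|C_k|) = n. \]
Equality throughout forces $|R_k|=|C_k|$ for every $k$ and $\mathrm{rank}(\AA[R_k,C_k])=|R_k|$; in particular, each block $\AA[R_k,C_k]$ is square and non-singular.

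Finally I would tie this back to $\XI$ and $\XJ$. By the definition of $\reach_{G(\AA)}(\HB)$, a vertex belongs to the reach exactly when its connected component meets $\HB$. Hence $\XI$ is the disjoint union of the $R_k$ over components meeting $\HB$, and $\XJ$ is the disjoint union of the matching $C_k$. Since $|R_k|=|C_k|$ for every such $k$, summing yields $|\XI|=|\XJ|$. Moreover $\AIJ$ is itself block-diagonal with exactly those non-singular blocks $\AA[R_k,C_k]$, so $\AIJ$ is non-singular.

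The main obstacle is bookkeeping rather than conceptual: one must justify that the connected components of the bipartite graph $G(\AA)$ pair up row-index sets and column-index sets cleanly (no isolated row or column vertex can survive when $\AA$ is non-singular, since that would zero out a row or column), and that $\XI$ and $\XJ$ inherit the same pairing under the reachability construction. Once the block-diagonal decomposition is set up, the rank count and the two conclusions $|\XI|=|\XJ|$ and $\det(\AIJ)\neq 0$ follow immediately.
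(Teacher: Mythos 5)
Your proof is correct, but it follows a genuinely different route from the paper's. The paper derives the result from {\bf Theorem \ref{the:2}}: it forms the Gram matrix $\BS=\left(\AIJ\right)^T\left(\AIJ\right)$ to conclude that $\AIJ$ has trivial kernel, hence full column rank $|\XJ|$ and $|\XI|\ge|\XJ|$, and then repeats the whole argument on $\AA^T$ with a right-hand side supported on $\XJ$ to obtain the reverse inequality $|\XJ|\ge|\XI|$. You instead exploit the structure directly: the connected components of the bipartite graph $G(\AA)$ induce (after permutation) a block-diagonal decomposition of $\AA$ with blocks $\AA[R_k,C_k]$, and the rank count $n=\sum_k\mathrm{rank}(\AA[R_k,C_k])\le\sum_k\min(|R_k|,|C_k|)\le\frac{1}{2}\sum_k(|R_k|+|C_k|)=n$ forces every block to be square and non-singular; since $\XI$ and $\XJ$ are exactly the row and column parts of the components meeting $\HB$, both claims follow. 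Your approach is more self-contained (it does not invoke {\bf Lemma \ref{lem:lift}}, {\bf Corollary \ref{cor:1}}, or {\bf Theorem \ref{the:2}}) and it proves a strictly stronger structural fact --- every component block is itself non-singular --- which also illuminates {\bf Theorem \ref{the:cover}}, where computable projections are built from an over disjoint cover by components. It additionally repairs a point the paper glosses over: the transpose step in the paper implicitly needs that $\reach_{G(\AA^T)}(\XJ)$ recovers precisely the pair $(\XJ,\XI)$, which is exactly the component-pairing your decomposition makes explicit. What the paper's argument buys in exchange is brevity, since it reuses machinery already established for the correctness of \locS. One small bookkeeping remark: your observation that non-singularity of $\AA$ excludes isolated row or column vertices is needed so that $\{R_k\}_k$ and $\{C_k\}_k$ genuinely partition $\{1,\dots,n\}$ with every component contributing to both sides; as you note, this is immediate (an isolated vertex would be a zero row or column), and in fact your rank inequality would already fail strictly if such a vertex existed.
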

\begin{proof}
	Let $\BS=\left(\AIJ\right)^T \left(\AIJ\right)$. It is easy to know
	that $\BS$ is a symmetric matrix. Proving  $\BS$ is a non-singular matrix is equivalent to  check that $\xx=\zero$ is a unique solution of   $\xx^T\BS\xx=0$. By {\bf Theorem \ref{the:2}}, $\zero$ is a unique solution of equation system
		$\AIJ\xx=\zero$.  Thus, $\xx=\zero$ is a unique solution of   $\xx^T\BS\xx=0$. In other words,
		$\BS$ is a non-singular matrix and $|\XI|\ge |\XJ|$ and ${\tt rank}(\AIJ)=|\XJ|$.  
		
		Let $b'$ be a vector which satisfies that $\{i\mid b'[i]\neq 0\}=\XJ$. When solving equation $\AA^T\xx=b'$,
		by the same way of above discussion
		we can obtain that $|\XJ|\ge |\XI|$. Therefore, $|\XI|=|\XJ|$ and $\AIJ$ is a  non-singular matrix.
\qed 
\end{proof}

\begin{algorithm}[H]	
	\SetAlgoCaptionSeparator{.}
	\DontPrintSemicolon
	\KwIn{ Matrix $\AA$ and vector $\rhs$   }
	\KwOut{ Vector $\ksi$  which satisfies that $\AA\ksi=\rhs$, or {\bf false} if $\AA\xx=\rhs$ has no solution.}
	\caption{\locS \label{alg:sparse}}
	Let $\HB=\{i\mid \rhs[i]\neq 0 \}$;\;
	Let	$\XI= \{i\mid i\in \reach_{G(\AA)}(\HB), i< n\}, \XJ= \{j-n\mid j\in \reach_{G(\AA)}(\HB), j\ge n\}$;\;
	\label{reach}
	\If{$\HB\not\subseteq \XI$ }{
		\Return {\bf false};\tcc{By {\bf Lemma \ref{lem:sol}}, if $\HB\not\subseteq \XI$ then problem has no solution. }}
	\tcc{By Theorem \ref{the:main}, $\AIJ$ can be replaced by any computable projection. }
	\tcc{By Corollary \ref{cor:sym}, $\AIJ$ is a non-singular square matrix.}
	\tcc{We can use \KLU\ solver to solve the following linear equation $\AIJ\yy=\rhs_{\XI}$.}
	Let $\aa$ be a vector satisfying $\AIJ\aa=\rhs_{\XI}$;\; 
	\tcc{By Corollary \ref{cor:1}, $\lift(\aa, \XI, n )$ is a solution of $\AA\xx=\rhs$ }
	\Return $\lift(\aa, \XI, n )$;\;
\end{algorithm}

\subsubsection{Impoving algorithm \locS\ during iteration}
Computing reachable edges for a  given node set $\HB$  is a key step  of \locS. Although computing reachable set of a given graph is of linear complexity,
 but in this case we need to construct a new graph per iteration.  Luckily, by  {\bf Theorem \ref{the:main}}, 
we can use any computable projection 
to replace $\AIJ$.  Thus we can present a fast method instead of explicitly computing $\reach_{G(\AA)}(B)$.
  As discussed in Section  \ref{sec:incremental}, $G(\UU_{k+1})$ is very similar to  $G(\UU_k)$, so this approach is feasible.
Thus, we utilize the information of  $G(\UU_k)$ to construct computable projection of $G(\UU_{k+1})$.


\begin{note}
	For a graph $G$,
  $\vv(G)$ denotes set of   nodes in  $G$.
\end{note}

\begin{mydef}
$G$ is a graph with  nodes $1,\cdots, 2n$.  $\{G_1,\cdots,G_s\}$ are  graphs such that $\vv(G_i)\subseteq \{1,\cdots, 2n\}$. We call 
$\{G_1,\cdots,G_s\}$ an \emph{over disjoint cover} of $G$ if \label{def:over}
\begin{enumerate}[(i)]
	\item $\vv(G_i)\cap\vv(G_j)=\emptyset$ for $i\neq j$;
	\item there is $G_i$ such that $e\in G_i$ for edge $e\in G$.
\end{enumerate}\label{def:cover}
\end{mydef}
For a given graph $G$, $G$'s 	 different connected components  $\HC_{1},\cdots,\HC_{s}$ is one of its 
{\em over disjoint cover}. 
\begin{theorem}
In {\bf Problem \ref{pro:1}},  let $\HB=\{i\mid \rhs[i]\neq 0 \}$, $\{G_1,\cdots,G_s\}$ be an {\em over disjoint cover} of $G(\AA)$.
Let $E=\{(i,j)\mid (i,j)\in G_i, \vv(G_i)\cap \HB\neq \emptyset  \}$. Let $\XI=\{i\mid (i,j)\in E\}, \XJ=\{j-n\mid (i,j)\in E\}$. If {\bf Problem \ref{pro:1}} has a solution then $\AA_{\XI, \XJ}$ is a computable projection.
\label{the:cover}
\end{theorem}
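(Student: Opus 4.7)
The plan is to verify the three conditions (i), (ii), (iii) of Definition \ref{def:comput} for $\AIJ$, exploiting the disjointness of vertex sets in the over disjoint cover.

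First I would establish condition (i), namely $\HB \subseteq \XI$. For each $b \in \HB$ the row $\AA[b,\cdot]$ cannot be the zero row, since otherwise $\rhs[b] \neq 0$ would force the system to be infeasible, contradicting the hypothesis that \textbf{Problem \ref{pro:1}} has a solution. Hence there exists some $j$ with $\AA[b,j] \neq 0$, so the edge $(b, n+j)$ lies in $G(\AA)$. By the covering property of Definition \ref{def:cover}(ii), this edge belongs to some $G_{i_0}$, which then contains $b$ as a vertex, so $\vv(G_{i_0}) \cap \HB \neq \emptyset$. Consequently $(b,n+j) \in E$, which gives $b \in \XI$.

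Next I would handle condition (ii). Take $j \in \XJ$ and any $i'$ with $\AA[i',j] \neq 0$; I must show $i' \in \XI$. By the definition of $\XJ$ there exists some $(i^*, n+j) \in E$, and this edge lies in a cover piece $G_{i_0}$ with $\vv(G_{i_0}) \cap \HB \neq \emptyset$. In particular $n+j \in \vv(G_{i_0})$. Now the edge $(i', n+j)$ belongs to $G(\AA)$ and hence to some $G_k$ in the cover, which forces $n+j \in \vv(G_k)$. By the disjointness condition of Definition \ref{def:cover}(i), $\vv(G_k) \cap \vv(G_{i_0}) = \emptyset$ whenever $k \neq i_0$, so we must have $k = i_0$. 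Therefore $(i', n+j) \in E$ and $i' \in \XI$. Condition (iii) follows by an entirely symmetric argument, using that the shared vertex now is $i$ rather than $n+j$.

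The main obstacle is conceptual rather than technical: one must use the solvability hypothesis precisely once (to guarantee each $b \in \HB$ has at least one incident edge in $G(\AA)$), and then the rest of the proof hinges on the observation that an edge of $G(\AA)$ sits in a \emph{unique} piece of the cover, because different pieces have disjoint vertex sets. Once this uniqueness is isolated, conditions (ii) and (iii) are routine ``same-piece'' arguments.
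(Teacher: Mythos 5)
Your proof is correct and follows essentially the same route as the paper's: condition (i) from the solvability hypothesis via Lemma \ref{lem:sol}, and conditions (ii) and (iii) from the fact that vertex-disjointness of the cover pieces forces any two edges of $G(\AA)$ sharing the vertex $n+j$ (resp.\ $i$) into the same piece $G_{i_0}$, whose edges all lie in $E$. Your direct ``same-piece'' argument is a cleaner rendering of the paper's somewhat tangled proof-by-contradiction of condition (ii), but the underlying idea is identical.
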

\begin{proof}
Since	{\bf Problem \ref{pro:1}} has a solution,
by {\bf Lemma \ref{lem:sol}}, there is $(i,j)\in G(\AA)$ for $i\in \HB$. Thus, $\HB\subseteq \XI$.

Assume that $\{i\mid A[i,j]\neq 0, j\in \XJ \} \not \subseteq \XI$, in other words, there is a $h$ such that $h\in \left (\{i\mid A[i,j]\neq 0, j\in \XJ \} \setminus \XI\right)$. In other words, $h\in \{i\mid A[i,j]\neq 0, j\in \XJ \} $ and $ h\not \in \XI$.
Let  $t\in\XJ$ such that $\AA[h,t]\neq 0$ and $(h,t+n)\in G_v$. By definition of  $\XJ$ there is an edge $(u,t+n)\in E$ because $t\in \XJ$. 
 
 By {\bf Definition \ref{def:cover}}, all $G(\AA)$'s edges whose nodes contain  $t$ must be completely contained in $G_v$. Therefore, the assumption can not hold. Hence, $(u,t+n) \in G_v$.
 
 We want to prove that $(u,t+n)\not \in G_v$.　We  prove it by contradiction. If $(u,t+n) \in G_v$, then by  definition of $E$ all the edges of $G_v$ will belong to $E$, in particular,  $(h,t+n)\in E$. Thus, $h\in \XI$. This 
conflicts with $h\not \in \XI $, so  $(u,t+n)\not \in G_v$. 

 Thus,
$\{i\mid A[i,j]\neq 0, j\in \XJ \}  \subseteq \XI$, by the same way we can prove (iii) of {\bf Definition \ref{def:comput}}. Hence $\AA_{\XI, \XJ}$ is a computable projection.
 \qed
\end{proof}
Theorem \ref{the:cover} provides a new approach to constructing computable projection.  This approach can be used to replace the computation of $\reach_{G(\AA)}(\HB)$ in line \ref{reach} in \locS.


\begin{lemma}
	$\UU,\UU'$ are two $n\times n$  matrices. $\{G_1,\cdots,G_s\}$  is an over disjoint cover of $G(\UU)$.
	Denote the set of nonzero elements in matrix  $(\UU-\UU')$ by    $\{(\UU-\UU')[i_k,j_k]\mid k=1,\cdots, m\}$.
	We iteratively update   $\{G_1,\cdots,G_s\}$  by the following operation:
	merging $G_i,G_j$ to $G'$ where $ G'=G_i\cup G_i\cup \{(i_k,j_k+n)\}$  if there is a link $(i_k,j_k+n)$  connecting  $G_i$ and $G_j$.
	Let $G'_1,\cdots,G'_{s'}$ be    finally resulted graphs  of the above iteration.    
	Then $\{G'_1,\cdots,G'_{s'} \}$ is an over disjoint cover of $G(\UU')$.  \label{lem:cover}
\end{lemma}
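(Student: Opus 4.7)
The plan is to verify the two clauses of Definition \ref{def:cover} for the terminal family $\{G'_1,\ldots,G'_{s'}\}$: vertex-disjointness (i) and edge-coverage (ii) for every edge of $G(\UU')$.

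For clause (i), I would argue by induction on the number of merges performed. The only operation replaces two current members $G_i,G_j$ by $G_i\cup G_j\cup\{(i_k,j_k+n)\}$, whose vertex set is $\vv(G_i)\cup\vv(G_j)$; this remains disjoint from $\vv(G_h)$ for every other surviving index $h$ because $\vv(G_h)$ was already disjoint from both $\vv(G_i)$ and $\vv(G_j)$. Hence pairwise vertex-disjointness is preserved at every step and so holds at termination.

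For clause (ii), I would fix an arbitrary edge $e=(a,b+n)\in G(\UU')$, i.e.\ $\UU'[a,b]\neq 0$, and exhibit a $G'_t$ containing it. Split into two cases according to whether the entry of $(\UU-\UU')$ at position $(a,b)$ vanishes. If $(\UU-\UU')[a,b]=0$, then $\UU[a,b]=\UU'[a,b]\neq 0$, so $e\in G(\UU)$ and by hypothesis $e$ lies in some initial $G_p$; since merges only enlarge vertex sets and preserve previously present edges, the final descendant of $G_p$ still contains $e$. If $(\UU-\UU')[a,b]\neq 0$, then $(a,b)=(i_k,j_k)$ for some $k$, and when the iteration processes this index the endpoints $a$ and $b+n$ either lie in two distinct current components (whereupon the merge step explicitly deposits $(i_k,j_k+n)=e$ into the merged component) or already sit inside a single current component (whose final descendant therefore contains both endpoints of $e$).

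The main obstacle is the last sub-case: the stated update rule only performs a merge when the two components are distinct, so one must justify that a newly appearing edge whose endpoints are already co-located is still ``in'' the cover. This is natural if one reads each $G_t$ as a subgraph on its vertex set $\vv(G_t)$; equivalently, one can strengthen the update rule to also append $(i_k,j_k+n)$ to the unique component containing both endpoints when no merge is triggered, which does not disturb the final vertex partition. Finally, edges of $G(\UU)\setminus G(\UU')$ impose no requirement on the cover, since Definition \ref{def:cover} only demands coverage of edges in the target graph $G(\UU')$; any such leftover edges sitting inside the $G'_t$'s are harmless.
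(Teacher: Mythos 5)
Your proof is correct and reaches the conclusion by a genuinely different route from the paper's. The paper argues by induction on $m$, the number of nonzero entries of $\UU-\UU'$: the base case $m=1$ splits into three sub-cases according to whether the entry at $(i_1,j_1)$ is nonzero in both matrices, only in $\UU$, or only in $\UU'$, and the inductive step interpolates a matrix $\UU''$ agreeing with $\UU'$ everywhere except at $(i_1,j_1)$ (where it agrees with $\UU$), so that the cover of $G(\UU')$ is obtained by chaining the $K_1$-change case with the one-change case. You instead verify the two clauses of Definition \ref{def:cover} directly: disjointness by noting that each merge replaces two vertex-disjoint members by a member whose vertex set is their union, and coverage by a per-edge case split on whether $(\UU-\UU')[a,b]$ vanishes. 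Your route is more elementary, dispenses with the induction on $m$, and --- importantly --- is more honest about the delicate sub-case: when a newly appearing edge $(i_k,j_k+n)$ has both endpoints already inside a single current component, the update rule as literally stated triggers no merge and deposits no edge, so clause (ii) fails under a strict reading of ``$e\in G_i$''. The paper's base case silently assumes two distinct components exist (``If there exist $G_i,G_j$ such that \dots'') and never addresses this situation, nor the situation where an endpoint of a new edge lies in no component at all (which can occur unless the initial cover spans every vertex, as the connected-component cover does). Your patch --- reading each member as the induced subgraph on its vertex set, or augmenting the rule to append the new edge to the unique component containing both endpoints --- is exactly what is needed to make the lemma literally true, and is an improvement on the paper's argument rather than a defect of yours.
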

\begin{proof}
	We prove it by induction. 
	
	When $m=1$. If both $\UU[i_1,j_1]$ and $\UU'[i_1,j_1]$ are nonzeros, then $\{G'_i \}=\{G_i \}$ and $G(\UU)=G(\UU')$.
	Thus, conclusion holds.

	If only $\UU[i_1,j_1]\neq 0$, then  $\{G'_i \}=\{G_i \}$ and $   \vv(G(\UU'))\subseteq \vv(G(\UU))$.  Thus conclusion holds.
	
	If only $\UU'[i_1,j_1]\neq 0$, then $G(\UU')$ only has one more link $(i_1,j_1+n)$  compared  with $G(\UU)$. If there exist $G_i,G_j$ such that $i_1\in \vv(G_i), j_1\in \vv(G_j)$, 
	 then   merge $G_i,G_j$ into a graph $G'=G_i\cup G_i\cup \{(i_1,j_1+n)\}$.  It is easy to check that $\left( \{G_1,\cdots,G_s\}\setminus \{ G_i,G_j\} \right)\cup\{G'\}$ satisfies (i)-(ii) of {\bf Definition \ref{def:over}}.  Hence the conclusion  holds when $m=1$.

	Assuming that the conclusion holds when $m\le K_1$. When $m=K_1+1$, let $\UU''$ be a matrix such that 
	$(\UU''-\UU')$ has only one nonzero element  $(\UU''-\UU')[i_1,j_1]$,  and $\UU[i_1,j_1]=\UU''[i_1,j_1]$. It is easy to know that matrix $(\UU''-\UU)$ has only  $K_1$   nonzero elements. 
	By assumption, $\UU'$'s \emph{over disjoint cover} can be constructed from $\UU''$'s, which can be constructed from  $\UU$'s .
	
	In summary, conclusion holds for any $m\ge 0$.\qed
\end{proof}	

Through {\bf Lemma \ref{lem:cover}}, we can construct \emph{over disjoint cover of} $G(\UU_{k+1})$ from $G(\UU_k)$'s. This can be used to
fast compute  computable projection in $(k+1)$th iteration from $k$th's.

\subsection{Incremental Change Property of $\UU_k$'s  Nonzero Pattern}
\label{sec:incremental}

Fast solving  equations (\ref{eq:simple}), (\ref{eq:simplexcoef}) and (\ref{dual:simple}) is a feasible way of improving efficiency of \SMCG. 
In the following section we will first  give 
an Algorithm \incS\  which utilizes the sparse and incremental change properties of matrices and vectors occurring in two consecutive equation systems to fast solve target equation. Second we will describe the three interesting phenomenons during \SMCG's iteration. And these phenomenons can let us directly employ \incS\ instead of other solvers. By this way, we can fast solve equations
(\ref{eq:simple}), (\ref{eq:simplexcoef}) and (\ref{dual:simple}).
\subsubsection{Fast method of solving similar linear equations}
\label{sec:incs}
\begin{problem}
	$\AA$ is a non-singular sparse matrix.
	$\AA,\AA'$ are two very similar  matrices, in other words, $(\AA-\AA')$ has few nonzero elements.
	$\rhs,\rhs'$ are  two very similar vectors. When there is a vector $\ksi'$ such that
	\begin{equation}
	\AA'\ksi'=\rhs',
	\label{eq:first}
	\end{equation}
	we want to give an  efficient algorithm to solve equation
	\begin{equation}	
		\AA\xx=\rhs.
	\label{eq:second}
	\end{equation}
	\label{pro:2}
\end{problem}

Assume that $\ksi $ is a solution of (\ref{eq:second}).
Because the coefficient matrices and right-hand-sides  of (\ref{eq:first}) and (\ref{eq:second}) are very similar. It is reasonable to 
believe that  $\ksi,\ksi'$  are very similar. Thus, we 
only need to compute the different part  for these two solutions when solving (\ref{eq:second}). The concrete algorithm of this idea is listed in Algorithm \incS.

The outline of Algorithm \incS\ is as follows: When we want to solve equation (\ref{eq:second})  when there is a vector $\ksi'$ satisfying (\ref{eq:first}).  In this case, we can believe
that   $(\rhs-\AA\ksi')$ is a sparse vector.  So, we  firstly use Algorithm \locS\ to find a vector $\Delta\ksi$  which is a  solution of equation $\AA\xx=(\rhs-\AA\ksi')$. It is easy to check that $\AA(\ksi'+\Delta\ksi)=\rhs$.

\begin{algorithm}
	\SetAlgoCaptionSeparator{.}
	\DontPrintSemicolon
	\KwIn{ Two matrices $\AA,\AA'$ and three vectors $\rhs,\rhs',\ksi'$ where $\AA$ is a non-singular sparse matrix, $\AA-\AA'$ is a sparse matrix, $\rhs-\rhs'$ is a sparse vector and $\AA'\ksi'=\rhs'$    }
	\KwOut{One vector $\ksi$  which satisfies that $\AA\ksi=\rhs$ }
	\caption{\incS\label{alg:inc}}
	\If{$\ksi'=0$}{
		\Return \  \locS($\AA$,$\rhs$);\;
	}
	Let $\Delta\rhs=\rhs-\AA\ksi'$;\;
	\tcc{Employing Algorithm \locS\ to solve this equation $\AA\xx=\Delta\rhs$ }
	Solving $\AA\xx=\Delta\rhs$ to obtain a solution $\Delta\ksi$;\;
	Let $\ksi=\ksi'+\Delta\ksi$;\;
	\Return $\ksi$;\;
\end{algorithm}

\subsection{Incremental Change Property of $\UU_k$'s  Nonzero Pattern}
\label{sec:inc}
In this section we will list the three interesting phenomenons.
Firstly, matrices $\UU_{k}, \UU_{k+1}$ have  little difference. Secondly,  right-hand-sides  in
(\ref{eq:simple}) and (\ref{dual:simple}) also have  little   difference between $k$th and $(k+1)$th iteration.  Thirdly,	 right-hand-side of (\ref{eq:simplexcoef}) is very sparse.  All of these phenomenons indicate that \locS\ and \incS\ are the proper solvers for (\ref{eq:simple}), (\ref{eq:simplexcoef}) and (\ref{dual:simple}).
So, we can use Algorithm \incS\ to  quickly construct solutions of  (\ref{eq:simple}), (\ref{eq:simplexcoef}) and (\ref{dual:simple}).

As described in {\bf Model \ref{model:structed}}, $\UU_k$ is entirely defined by $\KK,\SS_k$ and their elements' order. Therefore, changing order of $\KK,\SS_k$'s elements can give a better  incremental property of matrices  and  vectors occurring in iteration. In the following description,  without special statement the same elements of $\KK$ have the same matrix index  between $k$th and $(k+1)$th iteration, and the same  elements of $\SS_k$ and $\SS_{k+1}$ have the same matrix index. 
For obtaining   incremental property, we firstly redefine transition system rules as follows:
\begin{framed}
	
	\begin{enumerate}		
		\item When the entering variable is a link $e^*$  :
		\begin{enumerate}
			\item The same as Fig. \ref{sys:trans} 1-(a)
			\item The same as Fig. \ref{sys:trans} 1-(b)
			\item When the leaving variable is a link $e$:
			
			Let $ \NN_{k+1}=\{\NN_k\cup\{e^*\}\}\setminus\{e\}$. {\bf And let index of  $e$ in $\SS_{k+1}$ be the 	same as $e^*$ in $\SS_k$, and the other  links' indices are kept the same between  $\SS_k$ and $\SS_{k+1}$.}
			
		\end{enumerate} 
		\item When the entering variable is a $p'_j$:
		\begin{enumerate}
			\item The same as Fig. \ref{sys:trans} 2-(a) 
			\item The same as Fig. \ref{sys:trans} 2-(b) 
			\item When the leaving variable is a link $e$:
			
			Let $\QQ_{k+1,j}=\QQ_{k,j}\cup \{ p_j'\}, \NN_{k+1}=\NN_k\setminus\{ e\}$.
			{\bf $p'_j$ is a path corresponding  to saturate link $e$. Append $e$ to $\SS_k$
				to obtain $\SS_{k+1}$. And other links' indices are kept the same between $\SS_k$ and $\SS_{k+1}$.}
			
		\end{enumerate}
	\end{enumerate}
	\captionof{figure}{Transition system rules for iteration in  Algorithm \SMCG. \label{sys:stabletrans}}
\end{framed}

\paragraph{First phenomenon.} From transition system rules in Fig. \ref{sys:stabletrans}, we can find that size of matrix $\UU_{k+1}$ only has three cases, i.e.
$|\UU_k|, |\UU_k|+1 and |\UU_k|-1 $. Below we will discuss relation between $\UU_{k+1}$ and $\UU_k$ under these three cases.

First, when $|\UU_{k+1}|=|\UU_k|$,
 we will give a useful fact that the number of $\left(\UU_{k+1}-\UU_k\right)$'s columns which has nonzero elements is very few. 

\begin{lemma}
 In Note \ref{note:g}, if matrix $\CC'$ only has one column corresponding to commodity $i$  different from  $\CC_k$, then there are at most $|\QQ_{k,i}|$ columns of $\CC'\BB_k-\DD_k$ different from $\UU_k$. \label{lem:less}
\end{lemma}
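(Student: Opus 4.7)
The plan is to reduce everything to analyzing the difference matrix $(\CC'-\CC_k)\BB_k$, since by definition $\CC'\BB_k - \DD_k$ and $\UU_k = \CC_k\BB_k - \DD_k$ agree exactly in those columns where $(\CC'-\CC_k)\BB_k$ vanishes. So the task becomes: bound the number of nonzero columns of $(\CC'-\CC_k)\BB_k$.

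First I would unfold the structure of $\BB_k$ as described in Section~\ref{section:sparse}. By definition, column $b$ of $\BB_k$ is the standard basis vector $e_h$ where $h$ is the commodity owning the $b$th secondary path; equivalently, the entry $\BB_k[h,b]$ is $1$ iff the $b$th secondary path belongs to commodity $h$, and $0$ otherwise. In particular, row $i$ of $\BB_k$ is supported precisely on the columns indexing secondary paths of commodity $i$, so it has exactly $|\QQ_{k,i}|$ nonzero entries. Setting $\Delta\CC = \CC'-\CC_k$, the hypothesis is that $\Delta\CC$ is zero outside its $i$th column.

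Second, I would write out the matrix product entrywise:
\[
(\Delta\CC \cdot \BB_k)[a,b] \;=\; \sum_{h=1}^{\LL} \Delta\CC[a,h]\,\BB_k[h,b] \;=\; \Delta\CC[a,i]\,\BB_k[i,b],
\]
where the last equality uses that only the $i$th column of $\Delta\CC$ survives. Hence column $b$ of $\Delta\CC\cdot\BB_k$ is the zero vector whenever $\BB_k[i,b] = 0$. Consequently, the number of nonzero columns of $\Delta\CC\cdot\BB_k$ is at most the number of indices $b$ with $\BB_k[i,b]\neq 0$, which by the previous paragraph equals $|\QQ_{k,i}|$.

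Finally, since $(\CC'\BB_k-\DD_k) - \UU_k = \Delta\CC\cdot\BB_k$, the columns in which $\CC'\BB_k - \DD_k$ differs from $\UU_k$ are exactly the nonzero columns of $\Delta\CC\cdot\BB_k$, giving the bound $|\QQ_{k,i}|$. I do not anticipate any real obstacle: the only care needed is being consistent with the row/column index conventions for $\CC_k$ and $\BB_k$ introduced in Section~\ref{section:sparse} (columns of $\CC_k$ are indexed by commodities, rows of $\BB_k$ are indexed by commodities), and noting that the bound is \emph{at most} because some of the columns $b$ with $\BB_k[i,b]\neq 0$ may still give a zero column of $\Delta\CC\cdot\BB_k$ when $\Delta\CC[a,i]=0$ for every $a$ that matters.
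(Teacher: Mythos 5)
Your proof is correct and follows essentially the same route as the paper's: both arguments rest on the observation that each column of $\BB_k$ belonging to a path of commodity $h$ is the standard basis vector $e_h$, so only the $|\QQ_{k,i}|$ columns of $\CC\BB_k$ indexed by $\QQ_{k,i}$ can depend on the $i$th column of $\CC$. Your reformulation via the difference matrix $(\CC'-\CC_k)\BB_k$ is just a cleaner, entrywise write-up of the paper's ``changing column $c$ only changes those columns'' argument.
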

\begin{proof}
	By the definition of $\BB_k$ in (\ref{eq:factor}), every column of matrix $\BB_k$ corresponding to a commodity $i$. Let  $c$ be $i$th column of $\CC_k$. By the order of $\KK$, $c$ is corresponding to primary path $p_{k,i}$.   Let $\beta$ be $j$th  column   of  matrix $\BB_k$    which is corresponding to a commodity $i$. By equation (\ref{eq:factor}), the form of $\beta$ is as follows
	\[\beta[j]=
	\begin{cases}
	1 \mbox{ if } j=i,\\
	0 \mbox{ otherwise.} 
	\end{cases}
	\]
	 Hence, in product $\CC_k\BB_k$ only column $c$ of matrix $\CC_k$ affects   $\beta$.  And $\CC_k \beta=c$ is the $j$th column of product $\CC_k\BB_k$. Thus, in product $\CC_k\BB_k$,  $i$th column of $\CC_k$ only affect columns  corresponding to $\QQ_{k,i}$.  Therefore, changing column $c$'s value only changes  $|\QQ_{k,i}|$ columns' values of product $\CC_k\BB_k$, since the number of
	  $\BB_k$'s columns  are the same as $\beta$ is
	  $|\QQ_{k,i}|$. \qed
\end{proof}
Employing {\bf Lemma \ref{lem:less}}, we can give relation between  $\UU_{k+1}$ and $\UU_{k}$ as follows:
\begin{enumerate}
	\item   If we use rule 1-(c) of Fig. \ref{sys:stabletrans} during iteration and let $e$ have the same index of $\SS_{k+1}$ as $e^*$ in $\SS_k$, then 
	$\UU_{k+1}$ has at most  \[ |\bigcup_{\left(i=1,\pi^{p_{k,i}}_e+\pi^{p_{k,i}}_{e^*}= 1\right) }^{\LL}\QQ_{k,i}| +|\{p\mid (\pi^p_e+\pi^p_{e^*})= 1,  p\in \bigcup_{i=1}^{\LL}\QQ_{k,i}  \}|\]
	columns different from $\UU_k$.
	
	\item  If  we use  rule 2-(a) of Fig. \ref{sys:stabletrans} during iteration, then 
	$\UU_{k+1}$ at most has $|\QQ_{k,i}|$ columns different   from  $\UU_k$. 

	\item If we use rule 2-(b)  of Fig. \ref{sys:stabletrans}
	during iteration, then $\UU_{k+1}$ at most has $1$ column different from $\UU_k$.
\end{enumerate}
In summary, when size of $\UU_{k+1}$ equals  size of $\UU_{k}$, matrix $(\UU_{k+1}-\UU_k)$ has   few nonzero columns.

Second, when $|\UU_{k+1}|=|\UU_k|+1$.
Only after transition system rule 2-(c), this case can occur. And  matrix $\UU_{k+1}$  can be written as
 \[\UU_{k+1}=\begin{bmatrix}
 \UU_{k}& \theta \\[0.3cm]
 \rho^T  & a\\
 \end{bmatrix}\]
 where $\theta, \rho$ are vectors and $a$ is scalar. So $\UU_{k+1}$ and $\UU_k$ are very similar.
 
Third, when $|\UU_{k+1}|=|\UU_k|-1$. Only after transition system rule 1-(a), 1-(b), this case can occur. And  relation between  $\UU_{k+1}$ and $\UU_k$ can be written as
  \[\UU_{k}=\begin{bmatrix}
  \UU^{(1)}_{k}& \theta_1 & \UU^{(2)}_{k} \\[0.3cm]
  \rho_1^T  & a   & \rho_2^T  \\[0.3cm]
  \UU_k^{(3)} & \theta_2 & \UU_k^{(4)} \\
  \end{bmatrix},\quad \UU_{k+1}= \begin{bmatrix}
  \UU^{(1)}_{k}& \UU^{(2)}_{k} \\[0.3cm]
  \UU_k^{(3)} & \UU_k^{(4)} \\
  \end{bmatrix} \] 
 where  $\UU^{(i)}_{k}$ are matrices, $\theta_i, \rho_i$ are vectors and $a$ is a scalar. So $\UU_{k+1}$ and $\UU_k$ are very similar.

 \paragraph{Second phenomenon.}  By the same analysis procedure as above, we can easily check that
 \begin{enumerate}
 	\item $\AA_k$ and $\AA_{k+1}$
 	are similar;
 	\item $\cc_k$ and $\cc_{k+1}$ are similar;
 	\item $\bb_k$ and $\bb_{k+1}$ are also similar.
 \end{enumerate}
 Thus,  right-hand-sides  in (\ref{eq:simple}) and (\ref{dual:simple}) in $k$th and $(k+1)$th iterations are similar too.

  \paragraph{Third phenomenon.} 	In equation (\ref{eq:simplexcoef}), if associate entering variable of  $\bb$ is an edge $e^*$, 
   the form of $\beta$ is as follows
   \[\beta[i]=
   \begin{cases}
   1 \mbox{ if } i \mbox { is the index associated with } e,\\
   0 \mbox{ otherwise.} 
   \end{cases}
   \] By the discussion in  the beginning of Section \ref{section:sparse}, 
  every column of $\CC_k$ is a sparse column in general. So, $\CC_k\bb_{\KK}-\bb_{\SS_k}$ is also  a sparse vector in general at {\bf Model \ref{model:structed}}.
  
   Otherwise,
   $\bb$ is a basic vector associated with a path $p$. Then, in {\bf Model \ref{model:structed}} $\bb_{\KK}$ only has one nonzero element which is $1$.
   And $\bb_{\SS_k}$ is also sparse in general by discussion in the  beginning of Section \ref{section:sparse}. Thus, $\CC_k\bb_{\KK}-\bb_{\SS_k}$
   is the subtraction of two sparse vectors, so is also  a sparse vector in general.

 \subsection{Fast Solving Equations During Iteration}
 \label{sec:qcg}
 By   discussion in Section \ref{sec:inc}, coefficient matrix and right-hand-side of  (\ref{eq:simplexcoef}) are both sparse.
 Hence, we can employ algorithm \locS\ to solve  (\ref{eq:simplexcoef}).
 
In addition, coefficient matrices and right-hand-sides of  (\ref{eq:simple})  and (\ref{dual:simple}) are very similar between $k$th and $(k+1)$th iteration. So, in the following we will employ algorithm \incS\
 to solve equations in $(k+1)$th iteration from solution of $k$th ones.

\begin{enumerate}[(1)]
	\item When $|\UU_{k+1}|=|\UU_{k}|$, we directly employ \incS\ to solve (\ref{eq:simple}) and (\ref{dual:simple}).
	\item When  $|\UU_{k+1}|=|\UU_{k}|+1$, we firstly extend solution of corresponding equation of $k$th by setting the element of new index as $0$. After this extension, we employ \incS\ to solve (\ref{eq:simple}) and (\ref{dual:simple}).
	\item When $|\UU_{k+1}|=|\UU_{k}|-1$,  we firstly narrow solution of  corresponding equation of $k$th by  deleting element of lacking index.
	After this narrowing,  we employ \incS\ to solve (\ref{eq:simple}) and (\ref{dual:simple}).
\end{enumerate}


\section{Experiments}
\label{sec:exp}

\subsection{Environment}
The algorithm of \SMCG\ is implemented as a \texttt{C++} program. Compilation was done using {\tt g++} version 5.4.0 with optimization flags -O2.
We use latest \LAP\ (version 3) and latest \KLU\  which is contained in tool {\tt SuiteSparse}  4.5.6\footnote{ \url{http://faculty.cse.tamu.edu/davis/suitesparse.html}}.
All tests  are done  on a 64-bit Intel(R) Core(TM) i5 CPU 7400 @ 3.00GHz with 8GB RAM memory and Ubuntu 16.04 GNU/Linux.

We use  \incCG,  \kluCG\ and \lapCG\ to denote implementations of \SMCG\ with  \incS, \KLU\ and \LAP\  as linear equation solver, respectively. In other words, except for linear equation solver,  the other parts of \incCG,\ \kluCG\ and \lapCG\   are the  same.

Random test cases are created by generator $R(n)$ where $n$ is the number of nodes. The average node degree (sum of in degree and out degree)  is $10$.
  Each edge is generated by two random integers between $1$ and $n$ as its source and target node indices.
The edge capacity is a random integer between $1$ and $300$ and edge weight  is a random integer between 
$1$ and $10$. The source and target  indices of commodity are two random integers between $1$ and $n$. Commodity demand
is a random integer between $1$ and $100$. Every case has $1000$ commodities.
 
\begin{table}[!h]
	\tiny
	\centering
	\label{tab:other}
	\begin{tabular}{| c| c  c  c| c c  c| c c c |}
		\hline
		\multicolumn{1}{|c|}{Case}& \multicolumn{3}{c|}{Total time(s)} &\multicolumn{3}{c|}{Shortest path computing time(s)}& \multicolumn{3}{c|}{Linear equation solving time(s)}\\
		\hline
		&\incCG&\kluCG &\lapCG  &\incCG&\kluCG & \lapCG &\incCG&\kluCG &\lapCG  \\
		\hline  
	    $R(1000)$&	1538.51&1831.76&	29438.10&	178.65&	177.50&	188.56&	721.23&	1431.20&	27392.50 \\ 	\hline
	    $R(1500)$&	625.76&	801.76&	18970.80&	171.21&	176.44&	184.13&	225.17&	503.32&	17690.60\\ 	\hline
	    $R(2000)$&	258.90&	301.54&	5720.87&	146.49&	154.18&	145.39&	50.48&	105.68&	5219.57\\ 	\hline
	    $R(3000)$&	180.84&	187.72&	2157.97&	150.84&	153.10&	153.97&	12.17&	22.26&	1888.38 \\ 	\hline
	    $R(5000)$&	158.91&	161.67&	609.58&	151.81&	152.48&	152.80&	2.14&	4.56&	425.30 \\ 	\hline
	    $R(7000)$&	200.37&	201.93&	631.31&	194.18&	194.16&	198.87&	1.61&	3.34&	404.67 \\ 	\hline
	    $R(9000)$&	219.11&	217.10&	606.23&	213.44&	210.16&	231.69&	1.31&	2.73&	350.03 \\ 	\hline
	    $R(11000)$&	246.31&	251.99&	622.97&	240.89&	245.40&	265.44&	1.12&	2.30&	334.39 \\ 	\hline
	    $R(13000)$&	266.82&	278.51&	618.77&	261.25&	271.96&	287.63&	1.02&	2.03&	309.15 \\ 	\hline
	    $R(15000)$&	270.44&	271.01&	491.10&	265.30&	265.19&	277.14&	0.74&	1.46&	197.66 \\ 	\hline
	    $R(17000)$&	315.24&	314.90&	590.03&	309.22&	308.13&	324.42&	0.84&	1.65&	246.35 \\ 	\hline
	    $R(19000)$&	370.37&	375.79&	706.79&	363.69&	368.36&	372.57&	0.91&	1.73&	312.02 \\ 	\hline
	    $R(21000)$&	366.81&	374.80&	604.03&	360.39&	367.75&	350.00&	0.76&	1.46&	235.44 \\ 	\hline
	    $R(23000)$&	393.72&	395.79&	600.08&	387.01&	388.48&	357.91&	0.72&	1.37&	223.79 \\ 	\hline
	    $R(25000)$&	383.42&	386.20&	544.56&	377.12&	379.47&	386.96&	0.55&	1.05&	142.73 \\ 	\hline
	    $R(27000)$&	459.25&	460.63&	690.73&	451.89&	452.79&	460.94&	0.68&	1.24&	211.07 \\ 	\hline
	   $R(29000)$&	484.89&	484.76&	701.85&	477.21&	476.70&	488.44&	0.64&	1.16&	195.01 \\ 	\hline
	   $R(31000)$&	537.48&	539.50&	738.52&	529.59&	531.18&	524.11&	0.63&	1.14&	195.86 \\ 	\hline
	   $R(33000)$&	560.34&	588.66&	758.42&	552.50&	580.30&	563.80&	0.58&	1.05&	177.01 \\ 	\hline
	   $R(35000)$&	618.65&	623.70&	817.98&	610.31&	614.62&	605.78&	0.61&	1.14&	193.48 \\ 	\hline
	   $R(37000)$&	692.31&	701.09&	886.41&	683.28&	691.64&	644.37&	0.64&	1.11&	221.87 \\ 	\hline
	   $R(39000)$&	677.80&	699.56&	812.32&	668.49&	689.65&	609.01&	0.60&	1.13&	184.44 \\ 	\hline
	\end{tabular}
		\captionof{table}{Different parts time comparison   of \incCG,\ \kluCG\ and \lapCG.\label{tab:compare}}
\end{table}
In Table \ref{tab:compare}, you can see that  shortest path computing  and linear equation solving  are two major  time consuming parts 
of implementations. And except for  $R(1000)$, $R(1500)$ and  $R(2000)$, 
 the total time of \incCG,\ \kluCG\ and \lapCG\  are almost equal to 
sum of shortest path computing time and linear equation solving time. And the shortest path computing time of different implementations 
are almost the same. In addition, considering total time,  when linear equation solving is dominating  part, \incS\ will achieve high speedup. For example,  we can see in case $R(1000)$ using \incS\ instead of \LAP\ will achieve $19\times$ improvement. On the other hand, when   linear equation solving costs less time, \incS\ can reduce 
 linear equation solving  to a negligible fraction. For example, in case $R(7000),\cdots,R(39000) $, using \incS\ instead of \LAP\
 will reduce  linear equation solving time to less than $1\%$ of the total.

\begin{figure}[!h]
			\centering
			\includegraphics[width=0.8\textwidth]{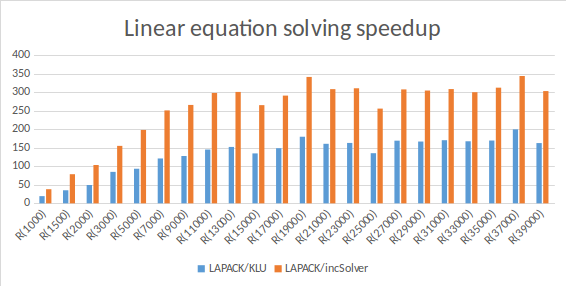}
			\caption[Network2]%
			{{\small  \KLU\  and \incS\  speedup compared with \LAP\ under different random cases}}    
			\label{fig:speedup}
\end{figure}
In Fig. \ref{fig:speedup}, we can see that \incS\ outperforms \KLU\ and \LAP\ on all the test cases. Among these 
cases, \KLU's speedup is between $19$ and $199$ compared with \LAP, while \incS\ achieves  a  speedup from   $37$ to $341$. When 
comparing \incS\ with \KLU, \incS's speedup is between $1.7$ and $2.1$.  As \incS\ is a prototype implementation, we believe that 
 \incS\ has great potential for improvement.  
  \section{Conclusion}
  \label{section:conc}
  In  this paper, for speeding up linear equation solving part in column generation for multi-commodity flow problem, firstly,  we use \emph{transition system} view to describe the procedure of \emph{column generation}.  This view
  can help us better understand the procedure of column generation and it also helps us 	
  conveniently present following improvement.  Secondly, we discuss the sparse property 
  of  coefficient matrix. In the \SMCG\ the average  number of nonzero coefficient in each  row of  coefficient matrix is very few. In our test it is less than $5$, even when the dimension of matrix is more than $1000$.  Finally,  we present  two algorithms. The first is a fast algorithm \locS\  (for \emph{localized system solver}) which can reduce the number of variables in solving a linear equation system  when both the coefficient matrix and right-hand-side   are sparse. The other is an  algorithm \incS\  (for \emph{incremental system solver})  which utilizes similarity  during the iteration in solving a linear equation system. All algorithms can be used in  column generation
  of multi-commodity problem. Preliminary numerical experiments show that the algorithms are significantly  faster than existing algorithms.  For example, under random test cases \incS\ delivers up to  $ 341\times$ (from $37\times$) improvement in the linear equation solving part compared with \LAP.  In addition, considering total time,  when linear equation solving is dominating  part, \incS\ will achieve high speedup. For example in some tests   using \incS\ instead of \LAP\ will achieve $19\times$ improvement. On the other hand, when   linear equation solving costs less time, \incS\ can reduce 
  linear equation solving  to a negligible fraction.  For example  in some cases using \incS\ instead of \LAP\
  will reduce  linear equation solving time to less than $1\%$ of the total.
 \section{Acknowledgements}
 The authors would like to thank Prof. Zongyan Qiu who helps to improve the presentation of the article.

\bibliographystyle{spmpsci}
\bibliography{cgmcf}

\end{document}